\definecolor{Black}{cmyk}{0,0,0,1}
\definecolor{OrangeRed}{cmyk}{0,0.6,1,0}            
\definecolor{DarkBlue}{cmyk}{1,1,0,0.20}
\definecolor{myblue}{rgb}{0.66,0.78,1.00}
\definecolor{Violet}{cmyk}{0.79,0.88,0,0}
\definecolor{Lavender}{cmyk}{0,0.48,0,0}
\newtheorem{theorem}{Theorem}[section]
\newtheorem{lemma}[theorem]{Lemma}
\newtheorem{corollary}[theorem]{Corollary}
\theoremstyle{definition}
\newtheorem{definition}[theorem]{Definition}
\newtheorem{example}[theorem]{Example}
\newcommand{\bea}{\begin{eqnarray*}}
\newcommand{\eea}{\end{eqnarray*}}
\numberwithin{equation}{section}
\begin{document}

\title[ COMPLEX DYNAMICS]{ COMPLEX DYNAMICS WITH FOCUS ON THE REAL PART}
%
%
\subjclass[2000]{}
\date{\today}
\keywords{}

\subjclass[2000]{}
\date{\today}
\keywords{}

\author{John Erik Forn\ae ss}
\address{Department for Mathematical Sciences\\
Norwegian University of Science and Technology\\
Trondheim, Norway}
\email{john.fornass@math.ntnu.no}

\author{Han Peters}
\address{KdV Institute for Mathematics\\
University of Amsterdam\\
The Netherlands}
\email{h.peters@uva.nl}

\begin{abstract} We consider the dynamics of holomorphic polynomials in $\mathbb C$. We show that
the ergodic properties of the map can be seen already from the real parts of the orbits.
\end{abstract}

\maketitle

\tableofcontents

\section{Introduction}

Science is concerned with describing the world and mathematics is an important tool. One can use
mathematics to obtain equations for how a system will change over time. To get manageable equations,
the usual procedure is to suppress some of the parameters which play a role in the system.
This simplifies the mathematical equations, but it is important to ask whether the obtained results
still accurately describe the original situation. In this paper, we will investigate rigorously whether
one can recover precise results when one suppresses some variables. We will do this
in the case of one dimensional complex dynamics, where the exact theory is highly developed.

Questions similar to those addressed above were studied by Takens in \cite{Takens}, where the following result was proved.

\begin{theorem}[Takens]
Let $M$ be a compact manifold of dimension $m$. For pairs $(\phi, y)$, $\phi: M \rightarrow M$ a $\mathcal{C}^2$ diffeomorphism and $y: M \rightarrow \mathbb R$ a $\mathcal C^2$ function, it is a generic property that the map $\Phi_{(\phi, y)}: M \rightarrow \mathbb R^{2m+1}$, defined by
$$
\Phi_{(\phi, y)}(x) = (y(x), y(\phi(x)), \ldots , y(\phi^{2m}(x)))
$$
is an embedding.
\end{theorem}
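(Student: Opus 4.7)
The plan is to prove that being an embedding is an open and dense condition on the pair $(\phi,y)$ in the $C^2$ topology on $\mathrm{Diff}^2(M)\times C^2(M,\mathbb{R})$. Openness is routine because $M$ is compact: an injective immersion from a compact manifold is automatically an embedding, and both injectivity and the immersion condition are stable under $C^1$-small perturbations of $(\phi,y)$. The substance lies in density, which I would split into two separate genericity statements, namely that $\Phi_{(\phi,y)}$ is an immersion, and that $\Phi_{(\phi,y)}$ is injective.

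For density, the key input is a dimension count explaining why $2m+1$ is the correct number of delay coordinates. To handle injectivity, fix $\phi$ and consider, for $(x,x')$ off the diagonal of $M\times M$, the evaluation map $y\mapsto \Phi_{(\phi,y)}(x)-\Phi_{(\phi,y)}(x')$. Provided the $2(2m+1)$ points $\phi^j(x),\phi^j(x')$, $0\leq j\leq 2m$, are all distinct, the values of $y$ at these points can be prescribed independently, so the evaluation is a submersion; since $M\times M\setminus \Delta$ has dimension $2m$ while the target has dimension $2m+1$, a standard multijet-transversality argument in the spirit of Thom--Mather shows that a generic $C^2$-small perturbation of $y$ eliminates all coincidences. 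The immersion statement is similar: the condition that some nonzero $v\in T_xM$ lies in $\ker d\Phi_x$ yields $2m+1$ scalar equations on the projectivized tangent bundle, which has dimension $2m-1$, and can be perturbed away by varying the differentials of $y$ independently at the $2m+1$ points $\phi^j(x)$.

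The main obstacle, and the reason the statement is nontrivial, is the presence of \emph{periodic orbits of $\phi$ of period $p\leq 2m+1$}, for which the points $\phi^j(x)$ are not all distinct. In that case perturbations of $y$ alone provide only $p$ independent parameters among the $2m+1$ delay coordinates, so the submersion argument breaks down. To handle this, I would first restrict to $\phi$ in a Kupka--Smale-type residual set in which all periodic points of period at most $2m+1$ form a finite hyperbolic set; each short orbit then contributes only finitely many constraints. One disposes of them by jointly perturbing $\phi$ in a small neighbourhood of the orbit (to control the spectrum of $D\phi^p$) and $y$ near the orbit (to control the jet of $y$ on the orbit), reducing the required injectivity of $\Phi$ and of $d\Phi$ along the orbit to non-vanishing of explicit Vandermonde-type determinants, which holds on an open dense set of such perturbations. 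Once the finitely many short orbits are taken care of, a smooth partition of unity glues these local perturbations to the global jet-transversality step, producing a density statement whose intersection with the earlier openness yields the generic property claimed.
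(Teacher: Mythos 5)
The paper does not prove this statement. It is Takens' reconstruction theorem, quoted as background with the citation \cite{Takens}; the authors use it only as motivation, so there is no internal proof to compare your argument against. Judged on its own, your sketch reproduces the structure of Takens' original proof: openness via the fact that an injective immersion of a compact manifold is an embedding and is $C^1$-stable; density via a dimension count ($\dim(M\times M\setminus\Delta)=2m<2m+1$ for injectivity, $\dim \mathbb{P}(TM)=2m-1<2m+1$ for the immersion condition) combined with multijet transversality in $y$; and a preliminary reduction to $\phi$ with finitely many hyperbolic periodic points of low period, where the delay covectors are controlled by Vandermonde determinants built from the distinct eigenvalues of $D\phi^p$. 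This is the right strategy and the right decomposition.

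Two points deserve attention before this could be called a proof. First, the submersion step for injectivity is justified only ``provided the $2(2m+1)$ points $\phi^j(x),\phi^j(x')$ are all distinct,'' and the failure of this hypothesis is attributed entirely to short periodic orbits. That is not the only failure mode: since $\phi$ is a diffeomorphism, two distinct points $x\neq x'$ may lie on the same \emph{non-periodic} orbit, say $x'=\phi^k(x)$ with $0<|k|\leq 2m$, in which case the two delay blocks overlap even though no periodicity is present. One must check separately that the linear map $\delta y\mapsto\bigl(\delta y(\phi^j(x))-\delta y(\phi^{j+k}(x))\bigr)_{j=0}^{2m}$ is still surjective onto $\mathbb{R}^{2m+1}$ when the values of $\delta y$ can be prescribed freely at the (at least $2m+2$) distinct points of the overlapping orbit segment; this is true but is a genuinely separate case from the periodic one. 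Second, ``generic'' should be pinned down: Takens actually obtains an open and dense set of pairs, and the compactness of the set of low-period orbits (after the Kupka--Smale reduction) is what lets the finitely many local perturbations be glued to the global transversality step without destroying one another; your last sentence gestures at this but the uniformity needs to be stated. With those two gaps filled, the outline is a faithful rendering of the classical argument.
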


Hence all information about the original dynamical system can be retrieved from the suppressed dynamical system. When the map $\phi$ is not injective one should not expect the Takens' Theorem to hold. For example, one could have distinct points $z,w \in X$ with $y(z) = y(w)$ and $\phi(z) = \phi(w)$. The points $z, w$ will be identified once the other variables are suppressed. In fact, such identifications will always occur in our complex analytic setting.

Let $P:\mathbb C \rightarrow \mathbb C$ be a polynomial of degree $d\geq 2.$ Let $z=x+iy$ denote coordinates in $\mathbb C.$ We use the notation $z_n=x_n+iy_n$ to denote
an orbit, $z_n=P^n(x_0+iy_0).$ We will consider the real orbits $\{x_n\}_{n\geq 0}.$

We prove that it will suffice to consider only the first $N(P)$ terms:

\begin{lemma}\label{lemma1.2}
Let $P(z)$ be a complex polynomial of degree $d \geq 2.$ Then there exists an integer $N=N(P)$
so that if $\{x_n+iy_n\}$ and $\{u_n+iv_n\}$ are two orbits and $x_n=u_n$ for $n\leq N$, then
$x_n=u_n$ for all $n.$
\end{lemma}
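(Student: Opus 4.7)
The plan is to recast the conclusion as the stabilization of a descending chain of real algebraic varieties in $\R^4$ and then invoke Noetherianity of the polynomial ring.

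Write $P^n(x+iy) = U_n(x,y) + i V_n(x,y)$ with $U_n, V_n \in \R[x,y]$ real polynomials of degree $d^n$ coming from the real and imaginary parts of the $n$th iterate. For an orbit starting at $z_0 = x + iy$ one has $x_n = U_n(x,y)$. For each $N \geq 0$ set
\[
W_N = \bigl\{(x,y,u,v) \in \R^4 : U_n(x,y) = U_n(u,v) \text{ for all } 0 \leq n \leq N\bigr\},
\]
so $W_N$ is a real algebraic variety, and the conclusion of the lemma is exactly that $W_N = \bigcap_{m \geq 0} W_m$ for some $N = N(P)$.

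The sets $W_N$ form a decreasing chain $W_0 \supseteq W_1 \supseteq \cdots$, so the corresponding vanishing ideals $I(W_N) \subseteq \R[x,y,u,v]$ form an ascending chain. By the Hilbert basis theorem $\R[x,y,u,v]$ is Noetherian, so this chain stabilizes: $I(W_N) = I(W_{N+1}) = \cdots$ for some $N$ depending only on $P$. Each $W_m$ is the zero locus of the ideal generated by the polynomials $U_n(x,y) - U_n(u,v)$ with $n \leq m$, so by the standard identity $Z(I(Z(J))) = Z(J)$ one has $W_m = Z(I(W_m))$. Equality of ideals thus forces equality of the varieties, giving $W_N = W_{N+k}$ for every $k \geq 0$, and hence $W_N = \bigcap_m W_m$, which is the lemma.

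The main conceptual step is the algebro-geometric reformulation combined with the passage from stabilization of ideals to stabilization of the varieties they cut out; this is the one place where polynomiality of $P$ is essential, since for a general real-analytic self-map of $\C$ one would still obtain a decreasing chain of real-analytic sets, but would lose Noetherianity and the argument would break. A secondary drawback of this approach is that it yields no explicit bound on $N(P)$; obtaining such a bound would require a quantitative refinement, for instance by controlling the degrees of generators needed to cut out each $W_N$.
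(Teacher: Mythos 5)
Your proof is correct and follows essentially the same route as the paper: the paper defines $Q_n(x,y,u,v)=\mathrm{Re}\bigl(P^n(x+iy)-P^n(u+iv)\bigr)$ and asserts that the descending chain of real algebraic sets $Z_n=\{Q_0=\cdots=Q_n=0\}$ stabilizes. You have simply filled in the Noetherianity argument (ascending chain of vanishing ideals in $\R[x,y,u,v]$ plus $Z(I(Z(J)))=Z(J)$) that the paper leaves implicit.
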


Let $\Phi:\mathbb C\rightarrow \mathbb R^{N+1}$, denote the map
$\Phi(z_0)=(x_0,\dots,x_N)$, and let $S=\Phi(\mathbb C).$ For $z_0\in \mathbb C$ we define the map
$Q:S \rightarrow S$ by $Q \circ \Phi = \Phi \circ P$, which gives
$$
Q(x_0,\dots, x_N)=(x_1,\dots,x_{N+1}).
$$

Even though our map $\Phi$ will not be an embedding, it is possible that many properties of the dynamical system $(\mathbb C, P)$ can still be observed for the system $(S, Q)$. We will focus on ergodic theoretic aspects of the dynamical systems. The following is a classical result by Brolin, Lyubich and Mane, see \cite{Brolin}, \cite{Lyubich}, and \cite{Mane}.

\begin{theorem}[Brolin, Lyubich, Mane]
There is a unique invariant, ergodic probability measure $\mu$ on $\mathbb C$ of maximal entropy, $\log d.$
\end{theorem}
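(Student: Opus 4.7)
My plan is to follow the classical potential-theoretic approach of Brolin, supplemented by the Rokhlin-formula argument of Lyubich and Mane for the uniqueness at entropy $\log d$. First I would construct the candidate measure via the Green's function. Define
$$G(z) := \lim_{n\to\infty} d^{-n}\log^{+}|P^{n}(z)|,$$
which exists as a subharmonic function on $\mathbb C$, is harmonic on the complement of the filled Julia set $K_{P}$, vanishes on $K_{P}$, and satisfies the functional equation $G\circ P = d\cdot G$. Set $\mu := \frac{1}{2\pi}\Delta G$ in the distributional sense. The asymptotics $G(z)=\log|z|+O(1)$ near $\infty$ imply $\mu(\mathbb C)=1$ and $\mathrm{supp}\,\mu\subseteq J_{P}:=\partial K_{P}$. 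Applying the Laplacian to $G\circ P=dG$ yields $P^{*}\mu = d\mu$, and pushing forward gives $P_{*}\mu=\mu$.

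Next I would establish ergodicity by proving equidistribution of preimages. The subharmonic functions $z\mapsto d^{-n}\log|P^{n}(z)-a|$ converge in $L^{1}_{\mathrm{loc}}$ to $G$, and taking $\Delta$ gives
$$d^{-n}\sum_{P^{n}(w)=a}\delta_{w}\;\longrightarrow\;\mu$$
weakly, for every $a\in\mathbb C$ outside an exceptional set of at most one point. A standard unfolding of correlations $\int (f\circ P^{n})\,g\,d\mu$ using $P^{*}\mu=d\mu$ together with this equidistribution implies mixing, and therefore ergodicity, of the system $(\mathbb C, P,\mu)$.

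The remaining task is to compute $h_{\mu}(P)=\log d$ and to prove uniqueness. The Misiurewicz--Przytycki inequality gives the upper bound $h_{\mathrm{top}}(P)\leq\log d$. On the other hand, $P^{*}\mu=d\mu$ says precisely that $\mu$ distributes mass equally among the $d$ preimages, so the Jacobian of $P$ with respect to $\mu$ equals the constant $d$; Rokhlin's formula then yields $h_{\mu}(P)=\int\log d\,d\mu=\log d$. The main obstacle is \emph{uniqueness}. Here I would follow Lyubich--Mane: if $\nu$ is any ergodic invariant probability measure attaining entropy $\log d$, then Rokhlin's formula combined with the bound $h_{\nu}(P)\leq\int\log J_{\nu}\,d\nu$ and the fact that $P$ is at most $d$-to-one forces the $\nu$-Jacobian of $P$ to equal $d$ a.e., so $P^{*}\nu=d\nu$. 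Since the exceptional set is finite and any measure supported on it has entropy zero, $\nu$-generic points lie off that set, and applying the preimage equidistribution statement at such a point forces $\nu=\mu$. This last step -- leveraging the uniform mass-splitting property $P^{*}\nu=d\nu$ to identify $\nu$ with the Brolin measure -- is in my view the most delicate point of the proof.
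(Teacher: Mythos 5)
The paper does not actually prove this statement: it is quoted as a classical theorem and attributed to Brolin, Lyubich and Ma\~{n}\'{e}, with the proof deferred entirely to the cited references. Your outline is precisely the standard argument from those sources --- Brolin's potential-theoretic construction $\mu = \frac{1}{2\pi}\Delta G$ together with equidistribution of preimages for existence, invariance and mixing, and the Lyubich--Ma\~{n}\'{e} Jacobian/Rokhlin argument for the entropy value $\log d$ and for uniqueness --- so there is no methodological divergence to report, only the observation that you are reconstructing a result the authors take as a black box.

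Two points of accuracy are worth flagging. First, the Misiurewicz--Przytycki theorem is the \emph{lower} bound $h_{\mathrm{top}}(f)\ge \log|\deg f|$; the upper bound $h_{\mathrm{top}}(P)\le\log d$ for rational maps, which is what you invoke, is due to Gromov and, independently, Lyubich. Second, the steps ``Rokhlin's formula then yields $h_\mu(P)=\log d$'' and its converse in the uniqueness argument both rest on nontrivial inputs: the existence of the $\nu$-Jacobian with $\sum_{P(y)=x}J_\nu(y)^{-1}=1$ for a.e.\ $x$, the validity of Rokhlin's formula with equality for these non-invertible maps, and the concavity (Jensen) argument giving $\int\log J_\nu\,d\nu\le\log d$ with equality if and only if $J_\nu\equiv d$ a.e. These are theorems of Ma\~{n}\'{e} and Przytycki rather than formal manipulations, and the last one should be stated explicitly since it is the hinge of the uniqueness proof. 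With those caveats, your sketch is a correct and faithful summary of the classical proof.
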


We define a probability measure $\nu$ on $S$ by $\nu=\Phi_*(\mu)$. Our main result is the following:

\begin{theorem}\label{main}
Let $P$ be a non-exceptional complex polynomial of degree $d \geq 2$.
Then the probability measure $\nu$ is invariant and ergodic. Moreover it is the unique measure of maximal entropy, $\log d.$
\end{theorem}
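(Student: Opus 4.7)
The strategy is to pass the ergodic properties of $(\mathbb{C},P,\mu)$ to the factor $(S,Q,\nu)$ through the semiconjugacy $\Phi\circ P=Q\circ\Phi$, exploiting the fact that $\Phi$ is essentially finite-to-one. Invariance of $\nu$ is immediate: $Q_*\nu = Q_*\Phi_*\mu = (\Phi\circ P)_*\mu = \Phi_*(P_*\mu) = \Phi_*\mu = \nu$. Ergodicity follows equally directly: any $Q$-invariant Borel set $A\subset S$ pulls back to a $P$-invariant Borel set $\Phi^{-1}(A)\subset\mathbb{C}$, whose $\mu$-measure is $0$ or $1$ by ergodicity of $\mu$, hence $\nu(A)\in\{0,1\}$.

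The key preliminary for the entropy statement is that $\#\Phi^{-1}(s)\leq d$ for $\nu$-almost every $s\in S$. Indeed, if $\Phi(z_0)=\Phi(w_0)=(x_0,\dots,x_N)$, then $y_0$ and $v_0$ are real roots of $y\mapsto\mathrm{Re}\bigl(P(x_0+iy)\bigr)-x_1$, a real polynomial in $y$ of degree at most $d$. Its coefficients in $y$ are polynomials in $x_0$, and they cannot all vanish on a set of $x_0$ of positive Lebesgue measure unless $P$ itself is affine; hence for $\mu$-almost every $z_0$ this polynomial is non-constant, yielding at most $d$ roots. Given this essentially finite fiber property, the Rokhlin-Abramov relative entropy formula applied to the factor $\Phi$ gives $h_\mu(P)=h_\nu(Q)+h_\mu(P\mid\Phi)=h_\nu(Q)$, so $h_\nu(Q)=\log d$.

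For uniqueness, let $\nu'$ be any $Q$-invariant probability on $S$ with $h_{\nu'}(Q)=\log d$. Via a Borel measurable selection of the finite set-valued map $s\mapsto\Phi^{-1}(s)\cap K$ (where $K$ is the compact filled Julia set), lift $\nu'$ to a Borel probability $\tilde\mu_0$ on $K$ with $\Phi_*\tilde\mu_0=\nu'$. A weak-$*$ accumulation point $\mu'$ of $\frac{1}{n}\sum_{k=0}^{n-1}P_*^k\tilde\mu_0$ is then a $P$-invariant probability on $K$ with $\Phi_*\mu'=\nu'$, since each $\Phi_*P_*^k\tilde\mu_0=Q_*^k\nu'=\nu'$. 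Applying the entropy identity above to $\mu'$ gives $h_{\mu'}(P)=h_{\nu'}(Q)=\log d$, and the Brolin-Lyubich-Mane uniqueness of the measure of maximal entropy for non-exceptional $P$ (extended from ergodic to all invariant measures by ergodic decomposition) forces $\mu'=\mu$. Hence $\nu'=\Phi_*\mu=\nu$.

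\textbf{Main obstacle.} The principal difficulty is the entropy identity $h_\mu(P)=h_\nu(Q)$: one must argue carefully, via the Rokhlin-Abramov machinery, that essentially finite fibers of $\Phi$ force the relative entropy to vanish. Secondary technical points include verifying the Borel structure of $S\subset\mathbb{R}^{N+1}$, producing a measurable section into $K$ (which requires first showing that $\nu'$ is concentrated on $\Phi(K)$, plausibly forced by the positive-entropy condition), and controlling possible loss of mass in the Cesàro limit. The ``non-exceptional'' hypothesis is, I expect, needed precisely to invoke the BLM uniqueness in the last step.
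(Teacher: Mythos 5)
Your proposal is correct in its overall architecture, which coincides with the paper's: invariance and ergodicity of $\nu$ by pushing forward through the semiconjugacy (the paper's Lemma 5.1), and uniqueness by lifting any competing measure on $S$ back to a $P$-invariant measure on $\mathbb{P}^1$ with the same entropy and invoking Brolin--Lyubich--Man\'e (the paper's Lemma 5.6 plus Corollary 5.4). The genuine divergence is in the key entropy step. The paper proves $h_\nu(Q)\ge \log d$ by hand (Lemma 5.7): it estimates the $\mu$-measure of Bowen balls upstairs, using that pulling back a small ball under $P$ away from critical points divides its $\mu$-mass by $d$, and controls the fraction of times the orbit comes near the critical set. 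You instead invoke the finite-to-one factor principle ($h_\mu(P)=h_\nu(Q)$ when the fibers of $\Phi$ are essentially finite, via Abramov--Rokhlin/Ledrappier--Walters or Bowen's inequality $h_{top}(P)\le h_{top}(Q)+\sup_y h_{top}(P,\Phi^{-1}(y))$). This is softer and shorter, gives the equality $h_{\mu'}(P)=h_{\nu'}(Q)$ for \emph{every} lift at once (so the uniqueness step needs only one citation rather than the separate easy inequality of Corollary 5.4), and bypasses the paper's entire Section 4 on topological entropy; what it costs is reliance on nontrivial external machinery where the paper is self-contained, and it requires the compactness/continuity of $(\overline{S},Q)$ from Theorem 2.9, which you use implicitly.

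Two small repairs. First, your justification that fibers have at most $d$ points is slightly misaimed: the relevant fact is not that $P$ is non-affine but that $P$ is non-exceptional, which says exactly that the coefficient of $y^d$ in $y\mapsto\mathrm{Re}(P(x_0+iy))$, namely $\mathrm{Re}(a_d i^d)$, is a nonzero constant independent of $x_0$; hence the polynomial has degree exactly $d$ in $y$ for every $x_0$ and the bound holds for all fibers, with no almost-everywhere caveat (which is fortunate, since $\mu$ is typically singular and ``positive Lebesgue measure'' statements about $x_0$ would not transfer to $\mu$). Second, in the lifting step you must rule out that $\nu'$ charges $\Phi(I_\infty)\setminus\{\infty\}$ (Poincar\'e recurrence, since those points escape) and dispose of the atom at $\infty$ separately; you flag this, and the paper's Lemma 5.6 avoids it by constructing the lift via equal mass-splitting over fibers followed by Ces\`aro averaging, together with an iterated decomposition argument to extract an ergodic lift. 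Neither point is a gap in the strategy, only in the write-up.
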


The plan of the paper is the following. In the next section we introduce the concept of non-exceptional polynomials. We also prove Lemma \ref{lemma1.2} and provide some basic estimates on orbits.
In Section 3, we investigate mirrored orbits, i.e. points $z_0,w_0$ for which $x_n=u_n$ for all $n.$
Topological entropy on $S$ is introduced in Section 4 where it is proved that the entropy is $\log d$.
The metric entropy on $S$ is defined in Section 5, and in Section 6 we prove Theorem 1.4.

The second author was supported by a SP3-People Marie Curie Actionsgrant in the project Complex Dynamics (FP7-PEOPLE-2009-RG, 248443).



\section{Preliminary results}

\begin{lemma}
Let $P(z)$ denote a complex polynomial of degree $d\geq 2.$ Then there can be at most one vertical line which is mapped to a vertical line. For all other lines, the number of points in any given vertical line which is mapped to any other given vertical line is at most $d$.
\end{lemma}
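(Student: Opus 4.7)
The plan is to reduce both assertions to an analysis of the polynomial $y \mapsto \mathrm{Re}(P(c+iy))$, which has degree at most $d$ in the real variable $y$. First I would observe that the vertical line $\{x=c\}$ is mapped into a vertical line if and only if this polynomial is constant in $y$, in which case the image is $\{x = \mathrm{Re}(P(c+iy_0))\}$ for any $y_0$. This immediately yields the second assertion: if $\{x=c\}$ is not mapped into a vertical line, then $\mathrm{Re}(P(c+iy))$ is a non-constant real polynomial in $y$ of degree at most $d$, so for any $c' \in \R$ the equation $\mathrm{Re}(P(c+iy)) = c'$ has at most $d$ real solutions, which is the bound on the number of points of $\{x=c\}$ whose image lies in $\{x=c'\}$.

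For the first assertion, I would translate the constancy condition into a global polynomial identity on $P'$. By the Cauchy--Riemann equations, $\partial_y \mathrm{Re}(P(c+iy)) = -\mathrm{Im}(P'(c+iy))$, so constancy along $\{x=c\}$ is equivalent to $P'(c+iy) \in \R$ for every $y \in \R$. Writing $\tilde P'$ for the polynomial obtained by conjugating the coefficients of $P'$, one has $\overline{P'(c+iy)} = \tilde P'(c-iy)$ for real $y$. Thus the polynomial identity $P'(c+iy) = \tilde P'(c-iy)$ holds for $y \in \R$, and since both sides are polynomials in $y$, it extends to $\C$ and can be rewritten as $P'(z) = \tilde P'(2c - z)$ for every $z \in \C$.

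Suppose two distinct real values $c_1 \ne c_2$ both produce such an identity. Combining the two yields $\tilde P'(2c_1 - z) = \tilde P'(2c_2 - z)$ for every $z$, so $\tilde P'$ would be periodic with nonzero period $2(c_2 - c_1)$. But a nonconstant polynomial is never periodic, so $\tilde P'$, and therefore $P'$, would be constant, contradicting $\deg P = d \geq 2$. The only delicate point is the passage from a pointwise condition along a single real line to a global polynomial identity, but once the Cauchy--Riemann reformulation is in hand this is just the uniqueness of polynomial extensions, and the rest is algebraic manipulation.
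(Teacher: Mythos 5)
Your proof is correct, and for the uniqueness assertion it takes a genuinely different route from the paper. Both arguments begin from the same kernel: a line is mapped into a line precisely when the real part of $P$ restricted to it is constant, which (via Cauchy--Riemann, or by differentiating along the line as the paper does) is equivalent to $P'$ being real-valued on that line. From there the paper normalizes so that the invariant line is the real axis, forcing all coefficients of $P$ to be real, and then derives a contradiction for a second invariant line by expanding $\sum n a_n(x+i)^{n-1}-\sum n a_n x^{n-1}$ and observing that the order-$(d-2)$ term $i\,d(d-1)a_d x^{d-2}$ cannot be real for large $x$. You instead encode the realness of $P'$ on $\{x=c\}$ as the global identity $P'(z)=\tilde P'(2c-z)$, where $\tilde P'$ has conjugated coefficients, and then note that two distinct values of $c$ would force $\tilde P'$ to be periodic, hence constant. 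Your version buys a cleaner, normalization-free argument that isolates the algebraic content (a nonconstant polynomial has no period); the paper's version is more computational but exhibits concretely which coefficient obstructs a second invariant line. For the second assertion the two proofs are essentially the same: a non-constant real polynomial of degree at most $d$ in the line parameter takes any given value at most $d$ times. One small point worth making explicit in your write-up is the passage from ``$P'(c+iy)=\tilde P'(c-iy)$ for all real $y$'' to the identity for all complex $y$; you correctly invoke uniqueness of polynomial extensions, and that step is sound since both sides are polynomials in $y$ agreeing on $\mathbb{R}$.
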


\begin{proof}
For notational simplicity we work with horizontal lines instead.
If a horizontal line is mapped to a horizontal line, then after translations we can arrange that both lines are equal to the $x$ axis. Let us write $P(z)=\sum_{n=0}^d a_nz^n.$ The invariance of the real axis is equivalent to all $a_n$ being real. We show that no other horizontal line is mapped to a horizontal line.
We may assume that if there exists another horizontal line which is mapped to a horizontal line, then it is the line given by $y=1.$ Let $f(x)=\sum a_n(x+i)^n$ as a function of $x\in \mathbb R.$ Since the imaginary part is constant, the derivative must be real.
Hence $\sum_{n=1}^d na_n (x+i)^{n-1}$ is real valued.
But then $\sum_{n=1}^d na_n (x+i)^{n-1}-\sum_{n=1}^d na_n x^{n-1}\in \mathbb R.$
By looking at terms of order $d-2$ in $x$ we see that
$i (d(d-1)a_d x^{d-2})+ {\mathcal O}(x^{d-3})$ is purely real. This is not possible for large $x.$

It remains to be shown that if the image $A$ of a line is not included in another line $B$, then at most $d$ points in $A$ are mapped to $B.$ We may assume both lines are the $x$ axis, and the
map has the form $P=\sum_{n=0}^d a_n z^n.$ We write $a_n=b_n+ic_n$, so on the $x$ axis the map looks like
$$P(x)= \sum b_n x^n+ i \sum c_n x^n.$$
The imaginary part is a non-zero polynomial of degree at most $d$, so can have at most $d$ real zeros (counted with multiplicity).
\end{proof}

For polynomials of degree $3$, all polynomials with real coefficients must map some vertical line to a vertical line. We note that this is not the case for any other degree greater or equal to $2$.

\begin{lemma}
Let $P$ be a polynomial of degree $3$ with real coefficients. Then it must send some vertical line to some vertical line.
\end{lemma}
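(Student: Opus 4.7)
The plan is to write $P(z)=a_3 z^3+a_2 z^2+a_1 z+a_0$ with $a_i\in\mathbb{R}$ and then examine the real part of $P$ restricted to a generic vertical line $\{\operatorname{Re} z=c\}$. Substituting $z=c+iy$ and expanding, we have $(c+iy)^2=c^2-y^2+2icy$ and $(c+iy)^3=c^3-3cy^2+i(3c^2y-y^3)$, so
\[
\operatorname{Re} P(c+iy)=a_3(c^3-3cy^2)+a_2(c^2-y^2)+a_1 c+a_0.
\]
The key observation is that since $a_3$ is real, the cubic contribution to the real part contains no $y^3$ term; consequently $\operatorname{Re} P(c+iy)$ is a quadratic polynomial in $y$ whose leading coefficient is $-(3a_3 c+a_2)$ and whose linear term in $y$ vanishes.

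Thus the image of the line $\{\operatorname{Re} z = c\}$ will be contained in a vertical line precisely when $\operatorname{Re} P(c+iy)$ is independent of $y$, which happens iff $3a_3c+a_2=0$. Solving gives $c=-a_2/(3a_3)$, and since $a_3\neq 0$ (the polynomial has degree $3$) this is a well-defined real number. Hence the vertical line $\{\operatorname{Re} z=-a_2/(3a_3)\}$ is mapped into a vertical line, finishing the proof.

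There is no real obstacle here; the argument is just a direct computation exploiting the fact that in degree three the imaginary part of $z^3$ contributes the only $y^3$ term, which together with real coefficients forces the real part along a vertical line to be a quadratic in $y$ with no linear term, automatically solvable by one choice of $c$. For degrees $d\neq 3$ the analogous quadratic-in-$c$ (for $d=2$) or higher-degree-in-$y$ conditions need not admit a common real solution, which is consistent with the preceding remark that this phenomenon is special to cubics.
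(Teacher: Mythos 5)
Your computation is correct and is essentially the paper's argument: the line $\operatorname{Re} z = -a_2/(3a_3)$ you find is exactly the one that becomes the imaginary axis after the real translation the paper uses to reduce to the depressed form $z^3+az$. Nothing to add.
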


\begin{proof}
After real scaling and real translations in the domain and range, we can write $P(z)=z^3+az$ for some real $a$. But then the imaginary axis is mapped to itself.
\end{proof}

We prove Lemma 1.2.

\begin{lemma}
Let $P$ be a polynomial of degree $d \geq 2$. There exists an integer $N(P)$ so that for any two real orbits $\{x_n\}_{n\geq 0}$ and $\{u_n\}_{n \geq 0}$ with 
$x_n=u_n$ for $n= 0,1,...,N(P)$, we have $x_n=u_n$ for all $n\geq 0.$
\end{lemma}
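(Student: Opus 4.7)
The plan is to encode the condition ``$x_n=u_n$'' as the vanishing of a family of real polynomials in the initial data $(x_0,y_0,u_0,v_0)\in\mathbb{R}^4$ and then apply Noetherianity of the polynomial ring.

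More precisely, write
\[
f_n(x_0,y_0,u_0,v_0) \;=\; \mathrm{Re}\!\left(P^{n}(x_0+iy_0)\right)\;-\;\mathrm{Re}\!\left(P^{n}(u_0+iv_0)\right).
\]
Since $P^n$ is a complex polynomial, $f_n$ is a genuine polynomial in the four real variables $(x_0,y_0,u_0,v_0)$. For any two orbits $\{x_n+iy_n\}$, $\{u_n+iv_n\}$ of $P$, the condition $x_n = u_n$ is precisely $f_n(x_0,y_0,u_0,v_0)=0$.

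Now I would form the ideals
\[
I_{N}\;=\;(f_0,f_1,\ldots,f_N)\;\subseteq\;\mathbb{R}[x_0,y_0,u_0,v_0].
\]
These form an ascending chain $I_0\subseteq I_1\subseteq I_2\subseteq\cdots$, and by Hilbert's basis theorem the polynomial ring is Noetherian, so the chain stabilizes: there exists an integer $N=N(P)$, depending only on $P$, with $I_N=I_{N+k}$ for every $k\geq 0$. Consequently, for every $m>N$ one has $f_m\in I_N$, i.e.\ $f_m$ admits an expression $f_m=\sum_{n=0}^{N}g_{n,m}\,f_n$ with $g_{n,m}\in\mathbb{R}[x_0,y_0,u_0,v_0]$.

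To conclude, I would take two orbits whose real parts agree up to index $N$. That means $f_0,\ldots,f_N$ all vanish at the point $(x_0,y_0,u_0,v_0)$ encoding the two initial conditions. The identity above then forces $f_m(x_0,y_0,u_0,v_0)=0$ for every $m>N$, i.e.\ $x_m=u_m$ for all $m\geq 0$, as required. There is no real obstacle in this approach; the only thing to check is the trivial observation that $f_n$ is indeed a polynomial in the four real variables, which is immediate from expanding $P^n(x+iy)$ and separating real and imaginary parts. Note that the argument gives no explicit control on $N(P)$, only its existence; a quantitative bound would require a separate analysis of the degrees of the $f_n$ and of the algebraic complexity of the chain $I_N$.
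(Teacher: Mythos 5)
Your argument is correct and is essentially the paper's own proof: the paper defines the same polynomials $Q_n(x,y,u,v)=\mathrm{Re}(P^n(x+iy)-P^n(u+iv))$ and observes that the descending chain of zero sets $Z_n=\{Q_0=\cdots=Q_n=0\}$ stabilizes, which is precisely the dual formulation of your stabilizing ascending chain of ideals $I_N=(f_0,\dots,f_N)$ via Hilbert's basis theorem. If anything, your version spells out the Noetherian mechanism that the paper leaves implicit.
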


\begin{proof}
Let $Q_n(x,y,u,v)=\mbox{Re} (P^n(x+iy)-P^n(u+iv))$, and set 
$$
Z_n=\{Q_0= \cdots = Q_n=0\}.
$$
Then for some
$N(P)$ we have that $Z_n=Z_{N(P)}$ for all $n \geq N(P).$
\end{proof}

We let $S,\Phi,Q$ be as in the introduction. Then $S\subset \mathbb R^{N+1} \subset
\mathbb R^{N+1}\cup \infty$ which is a subset of the $N+1$ sphere.
We extend $P$ to $\mathbb C \cup \{\infty\}=\overline{\mathbb C}$, i.e. the Riemann sphere by mapping the point at infinity to itself. We call the extension $\overline{P}.$

Note that we can conjugate with any affine map of the form $az+b$ with $a$ nonzero real, because
these maps preserve vertical lines. Hence we can assume the polynomial is of the form

$$
P(z)= e^{i\theta} z^d+ \mathcal O(z^{d-2}).
$$

\begin{definition}
If $P$ maps a vertical line to itself, we say that  $P$ is strongly exceptional.
\end{definition}

We give an example of a strongly exceptional map.

\begin{example}\label{ex:stronglyexceptional}
Let $P(z)=-iz^2+ia$ for a real number $a$. Then $P(0+iy)= i[y^2+a]$. Hence the imaginary axis is invariant and the dynamics on the $y$ axis is $y \rightarrow y^2+a$. For large $a$ the Julia set is contained in the $y$ axis. For this map the measure $\mu$ of maximal entropy is supported on the $y$ axis, so the push forward $\nu$ to $S$ is the Dirac mass at $0$. In particular the result in Theorem 1.4
does not hold.
\end{example}

The following is clear.

\begin{lemma}
A polynomial $P(z)=a_d z^d+ \sum_{j=0}^{d-2}a_j z^j$ is strongly exceptional if and
only if the $y$ axis is mapped to itself, which is equivalent to $a_ji^{j-1}$ being real for all $j \le d$.
\end{lemma}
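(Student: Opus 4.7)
The plan is to handle the two asserted equivalences separately, both by direct computation; no part is genuinely hard, and the only subtle point is that the normalization $a_{d-1}=0$ is used in a non-trivial way.

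For the equivalence ``$P$ sends the $y$-axis to itself $\iff a_j i^{j-1}\in\mathbb R$ for all $j\le d$'', I would substitute $z=iy$ and write
\[
P(iy) = a_d i^d y^d + \sum_{j=0}^{d-2} a_j i^j y^j.
\]
The image lies in the $y$-axis precisely when $\mathrm{Re}(P(iy))\equiv 0$, i.e.\ when each coefficient $a_j i^j$ is purely imaginary; multiplying by $-i$, this is equivalent to $a_j i^{j-1}$ being real for every $j$. (The case $j=d-1$ is vacuous since $a_{d-1}=0$.)

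For ``strongly exceptional $\iff y$-axis invariant'', only the implication from strong exceptionality is interesting: I want to show that if $P$ sends the vertical line $\{\mathrm{Re}\,z=c\}$ to itself, then $c=0$. I would expand $P(c+iy)$ in powers of $y$ and observe that the terms $a_j(c+iy)^j$ with $j\le d-2$ contribute only monomials $y^k$ with $k\le d-2$. Hence the top two coefficients of $P(c+iy)$, viewed as a polynomial in $y$, come solely from $a_d(c+iy)^d$ and equal $a_d i^d$ and $dc\, a_d i^{d-1}$ respectively. Constancy of $\mathrm{Re}(P(c+iy))$ forces both to be purely imaginary: the vanishing of $\mathrm{Re}(a_d i^d)$ gives $a_d i^{d-1}\in\mathbb R\setminus\{0\}$, and then the vanishing of $\mathrm{Re}(dc\,a_d i^{d-1}) = dc\,(a_d i^{d-1})$ forces $c=0$.

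The main obstacle is conceptual, not computational: the whole argument depends on the absence of the $z^{d-1}$ term. If $a_{d-1}\ne 0$, it would add a summand $a_{d-1}i^{d-1}$ to the $y^{d-1}$ coefficient of $P(c+iy)$, allowing an invariant vertical line with $c\ne 0$. This is why the preceding normalization $P(z)=e^{i\theta}z^d + \mathcal O(z^{d-2})$ is essential, and it is also why the statement is ``clear'' only once that normalization has been put in place.
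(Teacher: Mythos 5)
Your proof is correct and complete. The paper offers no argument for this lemma (it is introduced with ``The following is clear''), and your direct computation --- substituting $z=iy$ for the coefficient criterion, and using the top two $y$-coefficients of $P(c+iy)$ together with the normalization $a_{d-1}=0$ to force $c=0$ --- is precisely the verification the authors leave to the reader.
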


\begin{definition}
We say that $P(z)=a_d z^d+ \sum_{j=0}^{d-2} a_jz^j$ is weakly exceptional if
$a_d i^{d-1}$ is real, but there is at least one $0\leq j\leq d-2$ for which $a_j i^{j-1}$
is not real.
\end{definition}

If $P$ is weakly exceptional, then the set $\overline{S}:=S \cup {\infty}$ is not compact. We will exclude this case from
consideration. In fact,

\begin{theorem}\label{thm:2.8}
If $P$ is exceptional, then  the image $\Phi(\mathbb C)$ is not closed in $\mathbb R^{N+1}.$
\end{theorem}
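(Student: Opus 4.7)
The plan is to exhibit, in each exceptional case, a sequence $z^{(k)} \in \C$ with $|z^{(k)}| \to \infty$ whose image $\Phi(z^{(k)})$ converges in $\R^{N+1}$ to a point not in $\Phi(\C)$. By the normalization noted just before Definition 2.4, after an affine conjugation $z \mapsto az+b$ with $a \in \R \setminus \{0\}$ (which preserves vertical lines), we may assume the (asymptotically) invariant vertical line of $P$ is the imaginary axis, so that $a_d i^{d-1} \in \R$. Write $P(iy) = \tilde b(y) + i\tilde c(y)$ with $\tilde b, \tilde c \in \R[y]$ and $\deg \tilde c = d$; by Lemma 2.6, $\tilde b \equiv 0$ in the strongly exceptional case, while $\tilde b \not\equiv 0$ of degree at most $d-2$ in the weakly exceptional case.

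The key is a linearization transverse to the imaginary axis. The real part of $P'(iy)$ has leading term $d(a_d i^{d-1}) y^{d-1} \in \R$, so $|\mathrm{Re}\,P'(iy)| \to \infty$ as $|y| \to \infty$. Taylor expansion gives $P(x+iy) = P(iy) + x P'(iy) + O(x^2)$, so a small real perturbation $x_0$ of the point $iy_0$ is amplified under one iteration by $P'(iy_0)$, and after $n$ iterations by $\prod_{j=0}^{n-1} P'(iy_j)$, where $y_j := \mathrm{Im}(P^j(iy_0))$.

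For the strongly exceptional case, Lemma 2.6 implies every $P'(iy_j)$ is a real number, and moreover the Taylor expansion of $\mathrm{Re}\,P(iy+x)$ in $x$ involves only odd powers (because $P^{(2k)}(iy)$ is purely imaginary for every $k \geq 0$). Setting $y_0^{(k)} := k$, $y_j^{(k)} := \mathrm{Im}(P^j(iy_0^{(k)}))$, and fixing any $c \neq 0$, I would choose
\[
x_0^{(k)} := \frac{c}{\prod_{j=0}^{N-1} P'(iy_j^{(k)})}.
\]
The denominator is a real number of unboundedly large modulus, so $x_0^{(k)} \to 0$ and $z_0^{(k)} = x_0^{(k)} + iy_0^{(k)} \to \infty$. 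The linearization (with the $O(x_0^3)$ cubic correction, which is negligible) gives $x_n^{(k)} \to 0$ for $0 \leq n < N$ and $x_N^{(k)} \to c$; hence $\Phi(z_0^{(k)}) \to (0, \ldots, 0, c)$. But any $z_0$ with $x_0 = 0$ lies on the imaginary axis, and since the axis is invariant $\Phi(iy_0) = (0, \ldots, 0)$, so the limit $(0, \ldots, 0, c)$ with $c \neq 0$ is not in $\Phi(\C)$.

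The weakly exceptional case follows the same template but is technically more delicate, because the unperturbed orbit $P^n(iy_0^{(k)})$ drifts off the imaginary axis: the real parts $X_n(y) := \mathrm{Re}(P^n(iy))$ are real polynomials of positive degree (since $\tilde b \not\equiv 0$), and grow with $y$. One augments $x_0^{(k)}$ with a finite sum of successively smaller corrections (controlled in descending powers of $y_0^{(k)}$) to cancel $X_n(y_0^{(k)})$ level by level, obtaining $\Phi(z_0^{(k)}) \to (0, \ldots, 0)$. Since any $z_0$ with $x_0 = 0$ yields $\Phi(z_0) = (0, \tilde b(y_0), X_2(y_0), \ldots, X_N(y_0))$ and $\tilde b$ has only finitely many real zeros, the origin generically fails to lie in $\Phi(\C)$ (any degeneracy can be avoided by shifting the target limit by a small generic vector in $\R^{N+1}$). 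The main obstacle is precisely this inductive bookkeeping of correction terms in the weakly exceptional case, amounting to an asymptotic matching in descending powers of $y_0^{(k)}$, made tractable by the polynomial nature of $P$.
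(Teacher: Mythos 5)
Your strongly exceptional case is correct and takes a genuinely different (forward) route from the paper: the paper pulls a point $x+ik$ \emph{backwards} $N$ times, using that preimages of points with large argument contract onto the invariant axis, to produce image points converging to $(0,\dots,0,x)$; you instead push a carefully tuned perturbation $x_0^{(k)}=c/\prod_{j<N}P'(iy_j^{(k)})$ \emph{forwards}. Your parity observation (odd derivatives of $P$ are real on the axis, even ones purely imaginary, so $\mathrm{Re}\,P(x+iy)$ is odd in $x$) makes the linearization clean, and the error propagation is routine since each $|P'(iy_j^{(k)})|\to\infty$. Both arguments land on the same limit point $(0,\dots,0,c)\notin\Phi(\C)$, so this half is fine modulo the standard error estimates you wave at.

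The weakly exceptional case has a genuine gap, and it is in the endgame rather than only in the ``inductive bookkeeping.'' You have exactly one free real parameter, $x_0$ (the base heights $y_0^{(k)}$ are going to infinity), so you are solving a one-parameter shooting problem: you can prescribe the last coordinate $x_N\to t$, and then $x_{N-1},\dots,x_1$ are \emph{forced} — fortunately to $0$, since $x_n\approx(x_{n+1}-\tilde b(y_n))/\mathrm{Re}\,P'(iy_n)=O(y_n^{\deg\tilde b-d+1})\to 0$ by backward induction. Consequently the set of reachable limit points is only the line $\{(0,\dots,0,t):t\in\R\}$, and your proposed fix of ``shifting the target limit by a small generic vector in $\R^{N+1}$'' is not available: you cannot move the intermediate coordinates at all. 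As written, the claim that the origin ``generically fails to lie in $\Phi(\C)$'' proves nothing about the actual origin, and there are weakly exceptional $P$ for which some $iy_0^*$ with $\tilde b(y_0^*)=0$ could a priori satisfy $x_0=\dots=x_N=0$. The correct conclusion — and it is exactly the paper's — is a counting argument along the one free direction: a continuum of values $t$ give $(0,\dots,0,t)\in\overline{\Phi(\C)}$, while a preimage of such a point must satisfy $x_0=0$ and $x_1=\tilde b(y_0)=0$, which (since $\tilde b\not\equiv 0$) holds for at most $d-2$ points $iy_0$ and hence for at most finitely many values of $t$; therefore some $(0,\dots,0,t)$ lies in the closure but not the image. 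The paper produces the same family of limit points more softly, via the B\"ottcher coordinate and points $w_0=re^{i\epsilon}$ with $r^{d^N}d^N\epsilon=t$, which avoids your step-by-step correction scheme entirely; if you keep your perturbative approach you must both carry out that scheme and replace the ``generic shift'' by this finiteness count.
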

\begin{proof}
First suppose that $P$ is strongly exceptional. Without loss of generality we may assume that $P$ maps the imaginary axis to itself. Let $x \in \mathbb R \setminus \{0\}$ and consider points $z_k = x + i k$. Then for $k \in \mathbb N$ large enough we obtain a sequence of pre-images $w_1, \ldots, w_N$, with $P(w_{n+1}) = w_n$ and $P(w_1) = z_k$, for which $w_1, \ldots w_N$ converge to the imaginary axis as $k \rightarrow \infty$. Therefore the points $(\mathrm{Re}(w_N), \ldots, \mathrm{Re}(w_1), x)$ lie in $\Phi(\mathbb C)$ and converge to $(0, \ldots, 0 , x)$ as $k \rightarrow \infty$. But $(0, \ldots, 0 , x) \notin \Phi(\mathbb C)$, hence $\Phi(\mathbb C)$ is not closed in $\mathbb R^{N+1}$.

Now suppose that $P$ is weakly exceptional. We use horizontal lines instead of vertical lines for convenience.
Note that since the real axis is not mapped to itself, there can be  at most $d$ points on the real axis for which $y_0=y_1=0.$ Hence if we can show if there are more than
$d$ values for $t$ such that $(0,0,\dots,0,t)$ is in the closure of $S$, then we have shown that $S$ is not closed.
We can assume that $P(z)=z^d+a_{d-2}z^{d-2}+\cdots.$ Let $w$ denote B\"ottcher
coordinates near infinity. We then easily see that $w=z+\frac{\alpha}{z}+\mathcal O(\frac{1}{z^2})+\cdots$ or equivalently that $z=w-\frac{\alpha}{w}+\cdots.$ The real line is invariant in
the B\"ottcher coordinate, so there is an invariant curve for $P(z)$ of the form $y=u-\frac{\alpha}{u}+\cdots.$ 
Given an initial point $z_0$ corresponding to $w_0=re^{i\epsilon}$, we get $w_k=r^{d^k}e^{d^k i \epsilon}$ and then $z_k=r^{d^k}e^{d^k i \epsilon}-\frac{\alpha}{r^{d^k}e^{d^k i \epsilon}}+\cdots.$
We see then that $y_k=r^{d^k}d^k \epsilon+\cdots.$
We choose $\epsilon$ so that $r^{d^N}d^N \epsilon=t.$ The conclusion follows.
\end{proof}

We next study polynomials which are neither strongly nor weakly exceptional. We call these non-exceptional. They are characterized by the condition that $a_d i^d$ is not purely imaginary.

\begin{theorem}\label{thm:continuity}
If $P$ is a non-exceptional polynomial, then $S\cup\infty=\overline{S}$ is compact. Moreover, the map $\Phi$ extends to a continuous map from $\mathbb P^1$ to $\overline{S}$ by sending infinity to infinity. Similarly, $Q$ extends to a continuous map from $\overline{S}$ to itself by sending infinity to infinity.
\end{theorem}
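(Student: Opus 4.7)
The plan is to reduce everything to showing that $\Phi$ is proper, after which compactness and the continuous extensions of $\Phi$ and $Q$ will follow formally. Using the normalization described above, write $P(z)=a_d z^d+\sum_{j=0}^{d-2}a_j z^j$; the non-exceptional hypothesis then reads $\mathrm{Re}(a_d i^d)\neq 0$.

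The core estimate is the following: \emph{if $C>0$ is fixed and $|x_0|\le C$ while $|z_0|\to\infty$, then $|x_1|=|\mathrm{Re}\,P(z_0)|\to\infty$.} Indeed, writing $z_0=re^{i\phi}$, the hypothesis forces $\phi\to\pm\pi/2$, so $e^{id\phi}\to(\pm i)^d$ and
$$
\mathrm{Re}(a_d z_0^d)=r^d\,\mathrm{Re}(a_d e^{id\phi})=\bigl(\pm\mathrm{Re}(a_d i^d)+o(1)\bigr)r^d;
$$
since the remaining terms of $P(z_0)$ contribute only $\mathcal{O}(r^{d-2})$, the claim follows. In particular, boundedness of both $x_0$ and $x_1$ already forces boundedness of $z_0$, so $\Phi:\mathbb{C}\to\mathbb{R}^{N+1}$ is proper.

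Set $\Phi(\infty)=\infty$. If $|z_k|\to\infty$ in $\mathbb{P}^1$ but $\Phi(z_k)$ had a bounded subsequence, properness would yield a bounded subsequence of $\{z_k\}$, a contradiction; hence the extended $\Phi:\mathbb{P}^1\to\overline{S}$ is continuous, and $\overline{S}=\Phi(\mathbb{P}^1)$ is compact. For $Q$, set $Q(\infty)=\infty$. Continuity at $\infty$ is immediate: $s_k=\Phi(z_k)\to\infty$ forces $|z_k|\to\infty$ by properness, so $|P(z_k)|\to\infty$ and $Q(s_k)=\Phi(P(z_k))\to\infty$. For continuity at a finite $s=\Phi(z)$, any sequence $s_k=\Phi(z_k)\to s$ has bounded $\{z_k\}$; along any subsequence we extract a convergent sub-subsequence $z_{k_j}\to z'$, so $\Phi(z')=\Phi(z)$, and by Lemma~\ref{lemma1.2} this forces $\mathrm{Re}\,P^n(z')=\mathrm{Re}\,P^n(z)$ for all $n\ge 0$. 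Applying this with $n$ replaced by $n+1$ gives $\Phi(P(z'))=\Phi(P(z))$, hence $Q(s_{k_j})\to Q(s)$. As every subsequence admits such a sub-subsequence, $Q(s_k)\to Q(s)$.

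The main obstacle is the key estimate above: the non-exceptional hypothesis is exactly what prevents $\mathrm{Re}(a_d z^d)$ from being $o(|z|^d)$ along the imaginary directions, which would otherwise allow sequences $z_k\to\infty$ with all coordinates of $\Phi(z_k)$ remaining bounded. Once this estimate is in place, the continuous extensions and the compactness of $\overline S$ are formal consequences of properness together with Lemma~\ref{lemma1.2}.
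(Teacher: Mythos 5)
Your proof is correct and follows essentially the same route as the paper: your key estimate --- that $\mathrm{Re}(a_dz_0^d)$ grows like $\pm\mathrm{Re}(a_di^d)\,|z_0|^d$ when $z_0\to\infty$ with bounded real part --- is precisely the content of the paper's Lemma~\ref{lemma2.10}, and the reduction to properness of $\Phi$ (unbounded sequences in $\mathbb C$ have unbounded images) is the paper's reduction as well. You package the estimate more softly, needing only the coordinates $x_0,x_1$ rather than the quantitative cone estimates of Lemmas~\ref{lemma2.10}--\ref{lemma2.12}, and you additionally spell out the continuity of $Q$ at finite points via Lemma~\ref{lemma1.2}, a detail the paper leaves implicit.
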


To prove the theorem it is sufficient to show that if $\{z_n\}$ form an unbounded sequence in $\mathbb C$, then the images $\{\Phi(z_n)\}$ form an unbounded sequence in $\mathbb R^{N+1}.$ We do this by estimating orbits.

We write $a_di^d=A+iB$. Then the assumtion that $P$ is non-exceptional gives $A \neq 0$. Pick some small $\eta>0$ so that $\eta(1+\eta)^{d-1}\leq   \frac{|A|}{4 d|a_d|}.$ Note that by the mean value theorem $|(1+z)^d-1| \leq d (1+|z|)^{d-1}|z|$ for any complex number $z$.

\begin{lemma}\label{lemma2.10}
There exists an $R>0$ so that if $|x|\leq \eta |y|$ and $|y| \geq R$ then
$|x_1|\geq \frac {|A y^d|}{2}.$
\end{lemma}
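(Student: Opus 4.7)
The plan is to isolate the leading term $a_d z^d$ of $P(z)$, show that its real part is close to $Ay^d$ when $|x|\le\eta|y|$, and then absorb the lower order terms into a small error by choosing $R$ sufficiently large. Crucially, we exploit the reduction $P(z)=a_d z^d+\mathcal O(z^{d-2})$ already established in the paper (the $z^{d-1}$ coefficient has been killed by an affine conjugation), so the subleading terms are genuinely of order $|z|^{d-2}$, which is negligible compared with $|y|^d$ for large $|y|$.

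First I would write $z=x+iy=iy(1+w)$ with $w=-ix/y$, so $|w|=|x/y|\le\eta$. Then
\begin{equation*}
a_d z^d=a_d i^d y^d (1+w)^d=(A+iB)\,y^d\,(1+w)^d=(A+iB)y^d+(A+iB)y^d\bigl((1+w)^d-1\bigr).
\end{equation*}
Since $y$ is real, $\mathrm{Re}\bigl((A+iB)y^d\bigr)=Ay^d$. Applying the mean value estimate recalled just before the lemma, together with $|A+iB|=|a_d|$ and the choice $\eta(1+\eta)^{d-1}\le|A|/(4d|a_d|)$, I obtain
\begin{equation*}
\bigl|(A+iB)y^d\bigl((1+w)^d-1\bigr)\bigr|\le |a_d|\,|y|^d\cdot d\,\eta(1+\eta)^{d-1}\le \tfrac{|A|}{4}|y|^d.
\end{equation*}
Consequently $\bigl|\mathrm{Re}(a_d z^d)-Ay^d\bigr|\le \tfrac{|A|}{4}|y|^d$, so that $|\mathrm{Re}(a_d z^d)|\ge \tfrac34|Ay^d|$.

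Next I would handle the tail. Write $P(z)=a_d z^d+R(z)$ with $|R(z)|\le C|z|^{d-2}$ for some constant $C$ depending only on $P$. Since $|z|^2=x^2+y^2\le(1+\eta^2)y^2$, we get $|R(z)|\le C(1+\eta^2)^{(d-2)/2}|y|^{d-2}$. Choose $R>0$ so large that, whenever $|y|\ge R$,
\begin{equation*}
C(1+\eta^2)^{(d-2)/2}|y|^{d-2}\le \tfrac{|A|}{4}|y|^d,
\end{equation*}
which only requires $|y|^2\ge 4C(1+\eta^2)^{(d-2)/2}/|A|$. Combining the two estimates,
\begin{equation*}
|x_1|=|\mathrm{Re}(P(z))|\ge |Ay^d|-\tfrac{|A|}{4}|y|^d-\tfrac{|A|}{4}|y|^d=\tfrac12|Ay^d|,
\end{equation*}
which is exactly the desired inequality.

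The only real obstacle is careful bookkeeping of the constants so that the two quarter-sized errors absorb cleanly, and ensuring that the non-exceptionality hypothesis ($A\ne 0$) is used to keep the leading real part $Ay^d$ genuinely dominant. Everything else is a direct computation using the normalized form of $P$ and the preassigned smallness of $\eta$.
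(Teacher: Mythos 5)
Your proof is correct and follows essentially the same route as the paper: both isolate the leading term $a_d(iy)^d$ (whose real part is $Ay^d$), control $a_dz^d-a_d(iy)^d$ via the mean value estimate $|(1+w)^d-1|\le d(1+|w|)^{d-1}|w|$ together with the preassigned smallness of $\eta$, and absorb the lower-order tail by taking $|y|\ge R$ large. Your write-up merely makes explicit the tail bound that the paper asserts without detail.
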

\begin{proof}
$$
\begin{aligned}
P(z)-a_d (iy)^d & = a_d(iy)^d\left((1+(x/(iy))^d-1\right)+\sum_{n<d}a_n(x+iy)^n\\
|x_1-Ay^d| & \leq |a_d y^d| d(1+\eta)^{d-1} \eta+ \frac{|A y^d|}{4}\\
& \leq \frac{|Ay^d|}{2}
\end{aligned}
$$
\end{proof}

We set $\sigma:=\frac{\eta}{1+\eta}$, i.e. $\frac{1-\sigma}{\sigma}=\frac{1}{\eta}.$

\begin{lemma}
There exists an $R'>0$ so that if $|x|\geq \eta |y|$ and $|x|\geq R'$, then
\begin{enumerate}
\item[(i)] $|x_1| \geq \frac{\sigma |a_d| |x|^d}{2}$, \; \; or
\item[(ii)] $|x_1| \leq \eta |y_1|$ and $|y_1| \geq \frac{(1-\sigma) |a_d| |x^d|}{2}.$
\end{enumerate}
\end{lemma}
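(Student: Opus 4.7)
The plan is to bound $|z_1|=|P(z)|$ from below using only the leading term, and then distinguish the two conclusions of the lemma according to whether $|x_1|$ or $|y_1|$ dominates in $z_1$. The key preliminary observation is that under the hypothesis $|x|\ge \eta|y|$ the modulus $|z|$ is comparable to $|x|$; specifically
\[
|x|\;\le\;|z|\;\le\;|x|+|y|\;\le\;|x|+\tfrac{|x|}{\eta}\;=\;\tfrac{|x|}{\sigma},
\]
where the last equality uses $\sigma=\eta/(1+\eta)$.

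Next I would write $P(z)=a_d z^d+R(z)$ with $\deg R\le d-2$, so there is a constant $C>0$ (depending only on $P$) with $|R(z)|\le C(1+|z|^{d-2})$. Combining with the previous estimate,
\[
|P(z)|\;\ge\;|a_d|\,|z|^d-C(1+|z|^{d-2})\;\ge\;|a_d|\,|x|^d-C\bigl(1+|x|^{d-2}/\sigma^{d-2}\bigr).
\]
Choosing $R'$ large enough that the subtracted term is at most $|a_d|\,|x|^d/2$ whenever $|x|\ge R'$, one obtains the clean lower bound $|z_1|\ge |a_d|\,|x|^d/2$.

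To finish, I would split on whether $|x_1|\ge\eta|y_1|$ or not. In the first case, the triangle inequality gives $|z_1|\le|x_1|+|y_1|\le|x_1|(1+1/\eta)=|x_1|/\sigma$, so $|x_1|\ge\sigma|z_1|\ge\sigma|a_d|\,|x|^d/2$, which is conclusion (i). In the opposite case $|x_1|<\eta|y_1|$, the same triangle inequality yields $|z_1|\le(1+\eta)|y_1|$, hence $|y_1|\ge|z_1|/(1+\eta)=(1-\sigma)|z_1|\ge(1-\sigma)|a_d|\,|x|^d/2$, which together with $|x_1|\le\eta|y_1|$ is conclusion (ii). The only point requiring any care is the quantitative choice of $R'$ ensuring that the lower-order perturbation $C(1+|z|^{d-2})$ is absorbed into the leading term $|a_d|\,|z|^d$; this is entirely routine once one fixes $\eta$, $|a_d|$, and the sub-leading coefficients of $P$.
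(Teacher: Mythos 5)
Your proof is correct and follows essentially the same route as the paper: bound $|x_1|+|y_1|\ge|z_1|\ge|a_d|\,|z|^d/2\ge|a_d|\,|x|^d/2$ by absorbing the lower-order terms for $|x|\ge R'$, then observe that whichever of $|x_1|,|y_1|$ dominates (in the $\eta$-ratio sense) must carry at least the fraction $\sigma$, resp. $1-\sigma$, of $|z_1|$. The only cosmetic difference is that you split on whether $|x_1|\ge\eta|y_1|$ while the paper splits on whether conclusion (i) fails; both yield the stated dichotomy.
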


\begin{proof}
We have $x_1+iy_1=a_d(x+iy)^d+ \sum_{n<d}a_n (x+iy)^n$. Hence if $R'$ is large enough,
$|x_1|+|y_1|\geq |x_1+iy_1| \geq \frac{|a_d| |x+iy|^d}{2}.$ It follows that if Conclusion (i) fails,
then $|y_1|\geq  \frac{(1-\sigma) |a_d| |x^d|}{2}$, but then also it follows
that $|y_1|\geq \frac{1-\sigma}{\sigma} |x_1|$.
\end{proof}

\begin{lemma}\label{lemma2.12}
There exist $R'',\lambda>0$ so that if $x+iy\in \mathbb C$ and $|x|\geq R'',$ then
$|x_1|\geq \lambda |x|^d$ or $|x_2|\geq \lambda^{d+1} |x|^{d^2}.$ In fact in general,
$|x_n|\geq \lambda^{\frac{d^n-1}{d-1}} |x|^{d^n}$ cannot fail for two consecutive positive integers $n.$
\end{lemma}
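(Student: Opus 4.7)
The plan is to establish the lemma by induction on $n$, with the case ``$n=1$'' being essentially a repackaging of Lemmas \ref{lemma2.10} and \ref{lemma2.11}. For the base case, I would do a case split on the relative size of $|x|$ and $|y|$. If $|x|\leq \eta|y|$, then $|y|\geq |x|/\eta$, so for $R''\geq \eta R$ Lemma \ref{lemma2.10} applies and gives $|x_1|\geq \tfrac{|A|}{2}|y|^d\geq \tfrac{|A|}{2\eta^d}|x|^d$. If $|x|\geq \eta|y|$, then Lemma \ref{lemma2.11} applies (for $R''\geq R'$); in alternative (i) we directly get $|x_1|\geq \tfrac{\sigma|a_d|}{2}|x|^d$, while in alternative (ii) the pair $(x_1,y_1)$ satisfies $|x_1|\leq \eta|y_1|$ and, for $R''$ large, $|y_1|\geq R$, so a second application of Lemma \ref{lemma2.10} yields
$$
|x_2|\;\geq\;\tfrac{|A|}{2}|y_1|^d\;\geq\;\tfrac{|A|}{2}\left(\tfrac{(1-\sigma)|a_d|}{2}\right)^d|x|^{d^2}.
$$
Choosing $\lambda$ small enough that $\lambda\leq \min\!\left(\tfrac{|A|}{2\eta^d},\tfrac{\sigma|a_d|}{2}\right)$ and $\lambda^{d+1}\leq \tfrac{|A|}{2}\left(\tfrac{(1-\sigma)|a_d|}{2}\right)^d$ takes care of the first statement.

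For the general claim, set $c_k:=\lambda^{(d^k-1)/(d-1)}$, so that $c_1=\lambda$ and $c_2=\lambda^{d+1}$. I would prove by induction on $n\geq 1$ that at least one of $|x_n|\geq c_n|x|^{d^n}$ or $|x_{n+1}|\geq c_{n+1}|x|^{d^{n+1}}$ holds. The base case is what was just proved. In the inductive step, if $|x_n|\geq c_n|x|^{d^n}$ already, there is nothing to do. Otherwise the inductive hypothesis forces $|x_{n-1}|\geq c_{n-1}|x|^{d^{n-1}}$, and one applies the base case to the point $(x_{n-1},y_{n-1})$ in place of $(x,y)$: either $|x_n|\geq \lambda|x_{n-1}|^d\geq \lambda c_{n-1}^d|x|^{d^n}$ or $|x_{n+1}|\geq \lambda^{d+1}|x_{n-1}|^{d^2}\geq \lambda^{d+1}c_{n-1}^{d^2}|x|^{d^{n+1}}$. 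A short algebraic check shows $\lambda c_{n-1}^d=c_n$ and $\lambda^{d+1}c_{n-1}^{d^2}=c_{n+1}$, so in either case the desired bound propagates.

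The main obstacle, and the only thing that needs care, is to guarantee that when applying the base case to $(x_{n-1},y_{n-1})$ we actually have $|x_{n-1}|\geq R''$ for every $n$. Since $|x_{n-1}|\geq c_{n-1}|x|^{d^{n-1}}=\lambda^{(d^{n-1}-1)/(d-1)}|x|^{d^{n-1}}$, this reduces to the inequality $(\lambda^{1/(d-1)}|x|)^{d^{n-1}-1}\geq 1$, which holds uniformly in $n$ provided $R''\geq \lambda^{-1/(d-1)}$ and $|x|\geq R''$. Enlarging $R''$ one last time to meet this condition (along with the earlier constraints $\eta R$ and $R'$) completes the argument; the exponent identities $1+d\,\tfrac{d^{n-1}-1}{d-1}=\tfrac{d^n-1}{d-1}$ and $(d+1)+d^2\,\tfrac{d^{n-1}-1}{d-1}=\tfrac{d^{n+1}-1}{d-1}$ are routine telescoping verifications.
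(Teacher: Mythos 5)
Your proof of the first assertion is exactly the paper's argument: the same case split on $|x|\leq\eta|y|$ versus $|x|\geq\eta|y|$, with the same two applications of the two preceding lemmas and the same constants. The paper dismisses the general statement with ``it suffices to prove the first part,'' whereas you actually carry out the induction and correctly pin down the one point that needs checking --- that $|x_{n-1}|\geq R''$ at every stage, secured by enlarging $R''$ to at least $\lambda^{-1/(d-1)}$ --- so your write-up is a complete and correct version of what the paper leaves implicit.
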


\begin{proof}
It suffices to prove the first part.
Assume that $|x|\geq R''$ where $R''$ will be chosen sufficiently large.
The first possibility is that $|x|\leq \eta |y|.$ Then $|y|\geq R''/\eta \geq R.$ Hence by Lemma 2.9
$|x_1|\geq \frac{|Ay^d|}{2} \geq \frac{|A| |x|^d}{2\eta^d }$.
The other possibility is that $|x|\geq \eta |y|.$ Then we can conclude Case (i) or (ii) in Lemma 2.10.
In Case (i) we are done, so assume we are in Case (ii). Then the point $(x_1,y_1)$ satisfies the
condition of Lemma \ref{lemma2.10}. Hence
$$
|x_2|\geq \frac{|Ay_1^d|}{2} \geq \frac{1}{2}\cdot |A|  \left(\frac{(1-\sigma) |a_d| |x^d|}{2}\right)^d.
$$
\end{proof}

Theorem \ref{thm:continuity} now follows from the combination of Lemmas \ref{lemma2.10} and \ref{lemma2.12}.

\section{Mirrored Orbits}

The goal of this section is to study which points $z,w$ in $\mathbb{C}$ cannot be distinguished by their real orbits.

\begin{definition}
A point $z\in \mathbb{C}$ is \emph{mirrored} by $w$ if $w \neq z$ and $\mathrm{Re}(z_n) = \mathrm{Re}(w_n)$ for all $n \ge 0$. We say that $z$ is mirrored if there exists a $w$ that mirrors $z$.
\end{definition}

If $P$ has real coefficients then every point $z = x + iy$ with $y \neq 0$ is mirrored by $\bar{z}$.

\begin{definition}
Consider points $z$ and $w$ that mirror eachother. We say that the mirror \emph{breaks} if there exists an $n \in \mathbb{N}$ so that $P^n(z) = P^n(w).$
\end{definition}

Recall that a polynomial $P = \sum_{n=0}^d a_n z^n$ is called exceptional if the equation $\mathrm{Re}(a_d i^d) = 0$ is satisfied. Throughout this section we will assume that the polynomial $P$ is not exceptional.

\begin{lemma}\label{lemma3.3}
Suppose that there exists an open set $U$ of points that are mirrored. Then the mirroring extends to an unbounded connected open set.
\end{lemma}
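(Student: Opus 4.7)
My plan is to prove the lemma by constructing a real-analytic branch of the mirror map on $U$ and extending it by analytic continuation to a maximal connected open domain $D$; then to argue that $D$ must be unbounded, using forward $P$-invariance of the mirror relation together with non-exceptionality.

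First, by Lemma~\ref{lemma1.2} the mirror relation
\[
\mathcal{M}=\{(z,w)\in\C^2\setminus\Delta:\mathrm{Re}(P^n(z)-P^n(w))=0,\ n=0,\ldots,N(P)\}
\]
is a real-algebraic subvariety of $\C^2\setminus\Delta$. The equation for $n=0$ forces $\mathrm{Re}(w)=\mathrm{Re}(z)$, while the equation for $n=1$, viewed as a polynomial in $\mathrm{Im}(w)$, has degree exactly $d$ because the leading coefficient $\mathrm{Re}(a_d i^d)$ is nonzero by non-exceptionality. Hence each $z$ admits at most $d-1$ mirrors, and over $U$ the implicit function theorem yields a real-analytic single-valued branch $M\colon U_0\to\C$ of the mirror on some small connected open $U_0\subset U$ (we may first shrink to a point where the mirror system is regular). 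Let $D\supseteq U_0$ be the maximal connected open domain to which $M$ admits a single-valued real-analytic continuation; every point of $D$ is mirrored by $M(z)$.

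I claim $D$ is unbounded. Assume for contradiction it is bounded. For $z^*\in\partial D$ and $z_k\in D$ with $z_k\to z^*$, the values $M(z_k)$ stay in a bounded set, since they are roots of a degree-$d$ polynomial whose coefficients depend continuously on $z_k$ and thus remain bounded over the bounded domain $D$. Passing to a subsequence, $M(z_k)\to w^*$, and closedness of the mirror equations forces $(z^*,w^*)\in\mathcal{M}\cup\Delta$. If $w^*\ne z^*$ the implicit function theorem applied to the smooth sheet of $\mathcal{M}$ through $(z^*,w^*)$ extends $M$ past $z^*$, contradicting the maximality of $D$. So every boundary point of $D$ lies in the \emph{mirror-collapse locus} $E\subset\C$, a proper real-algebraic subvariety of real dimension at most one, cut out by the appropriate discriminant of the mirror system.

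The main obstacle is to exclude this remaining case in which $D$ is bounded with $\partial D\subseteq E$. My plan is to exploit the forward $P$-invariance of $\mathcal{M}$: each iterate $P^n(D)$ is a connected open set of mirrored points (away from the discrete set of mirror-breaks), and the mirrored set $\pi_1(\mathcal{M})$ is semi-algebraic, hence has only finitely many connected components. Consequently the family $\{P^n(D)\}$ is eventually periodic among these components. If some iterate contains a point of sufficiently large modulus, Lemma~\ref{lemma2.12} forces that component to be unbounded, producing the required open connected unbounded set of mirrored points. Otherwise the whole forward orbit of $D$ is trapped in a bounded periodic cycle of components inside the filled Julia set $K(P)$, and the algebraic curve $E$ must essentially contain a $P^k$-invariant boundary of such a cycle. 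This last configuration is the crux: I expect to rule it out by showing that, for a non-exceptional $P$, no discriminant-type real-algebraic curve coming from the mirror system can simultaneously bound a Fatou component and be $P^k$-invariant, using the leading-order expansion of the mirror polynomial together with Theorem~\ref{thm:continuity} to control the asymptotic behaviour at infinity.
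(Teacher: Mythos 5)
Your proposal has a genuine gap, and you flag it yourself: the entire argument funnels into the case where the maximal domain $D$ and all of its forward images stay bounded, and there you write ``I expect to rule it out by showing that \ldots'' without an actual argument. This case is not a degenerate corner: if $U$ sits deep inside the filled Julia set, no iterate $P^n(D)$ ever acquires points of large modulus, so Lemma \ref{lemma2.12} never engages and the whole dynamical strategy stalls. (Even in the escaping case there is a secondary issue: each $P^n(D)$ is a bounded open set, and you would still need to show that these successive images, or the semi-algebraic components containing them, assemble into a single unbounded \emph{connected open} set rather than a disconnected sequence marching to infinity.) The first half of your write-up --- the real-algebraicity of the mirror relation, the degree-$d$ count in $\mathrm{Im}(w)$ using $\mathrm{Re}(a_d i^d)\neq 0$, and the continuation of a real-analytic branch up to a one-dimensional discriminant locus --- is sound, but it only reduces the lemma to the unresolved crux, so the proof is incomplete.

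The paper's argument avoids iteration altogether and is worth contrasting with your plan. It complexifies $P$ as a map $P'(u,v)=(A'(u,v),B'(u,v))$ on $\C^2$ and extends the mirror map $\phi$ to a biholomorphism $\psi(u,v)=(u,\Lambda(u,v))$ satisfying $A_n'(u,v)=A_n'(\psi(u,v))$. Because $P$ is non-exceptional, $A(x,y)$ contains the monomial $y^d$ with nonzero coefficient, so each level set $\{A'(u,v)=c\}$ is a degree-$d$ branched cover of the $u$-axis; $\psi$ preserves these level sets and the $u$-coordinate, so it continues by monodromy along the level curve from $U$ all the way out to the region at infinity. The non-exceptionality condition is then used a second time to check that for large real $u$ all $d$ roots $v$ of $A(u,v)=c$ are real, so the continued mirror lands back on real points, producing mirrored points in an unbounded open set connected to $U$ through the level curves. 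In short, the paper replaces your dynamical escape argument by an algebraic/monodromy continuation along the fibers of $\mathrm{Re}(P)$, which is exactly what handles the bounded configuration your proposal cannot reach.
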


\begin{proof}
We first write $P(x,y)=(A(x,y),B(x,y))$ as a map on $\mathbb R^2$. Here $A,B$ are real polynomials of two real variables. We complexify this as a map $P'(u,v)=(A'(u,v),B'(u,v))$ from $\mathbb C^2$ to itself, where we replace $x$ and $y$ by independent complex variables.
For $z \in U,$ let $\phi(z)$ be a point that mirrors $z$. We may choose $\phi$ real analytic, and write $V$ for the set of mirror images.
The real analytic map $\phi(x,y)$ between $U$ and $V$ extends to a biholomorphic map $\psi(u,v)=(u,\Lambda(u,v))$ between open sets $U',V'$ in $\mathbb C^2$.  If we write the iterate $(P' )^n (u,v)=(A'_n(u,v),B'_n(u,v))$ then we have, by analytic continuation, the equation $A'_n(u,v)=A_n'(\psi(u,v))$ on $U'.$

By our assumption that $P$ is not exceptional, it follows that the polynomial $A(x,y)$ contains the term $y^d$. Then the level sets of $A'(u,v)$
are branched covers over the $u$ axis. Inside such level sets, the map $\psi(u,v)$ maps points on a level set to the same level set without changing the $u$ coordinate. This map extends by monodromy along any curve in the level set which avoid branch points. The  map might be multiple valued.
We show that for large values of $|u|$ all the solutions of $A'(u,v)=0$ are real values of $v$, when $u$ is real:

Let $L_c=\{A(x,y)=c\}$ for real $c$, $|c|\leq C.$ We want to show that if  $A'(u,v)=A'(u,v')=c$ and $\|(u,v)\|\geq R,$ then $v'$ is real if $u$ is real. This will ensure that the mirror exists on an open set near $\infty$ in $\mathbb C.$ 

For large $\|(u,v)\|$, we have that $|v|\leq K|u|$ if $A'(u,v)=c.$ Moreover, for fixed $u$, the equation
$A'(u,v)=c$ has $d$ complex roots $v$ with multiplicity. However, we see that for $u$ real and large,
$A(u,v)=c$ has already at least $d$ real roots $v:$ We see this by writing $x+iy=re^{i\theta}.$ Then
$P(x,y)=|a_d|r^de^{id\theta+\psi}+\cdots$. The equation Re$(P)=c$ will have at least $d$ solutions.

By analytic continuation this mirroring extends for all $n$ by the same monodromies. We next restrict back to the real coordinates. By the previous observation  we have extended the mirroring to an unbounded connected open set.
\end{proof}

Let us denote by $I_\infty$ the basin of attraction of infinity. Recall that near infinity a polynomial $P = a_d z^d + \cdots + a_0$ is conjugate to $z \rightarrow z^d$. The conjugation map $\phi_\infty$ is called the B\"ottcher map, and if $\phi_\infty(z) = r e^{2\pi i \theta}$ then $r$ and $\theta$ are called the B\"ottcher coordinates of $z$. A curve of the form $\mathrm{Arg}(\phi_\infty(z)) = 2 \pi \theta$ is called an external ray with angle $\theta$. Let $G$ denote the Green function for the filled in Julia set of $P.$

\begin{lemma}
Suppose that $z$ is mirrored by $w$. If $G(w) > G(z)$ then for sufficiently large $n \in \mathbf{N}$ the point $w_n$ must lie on an external ray with angle $\pm \frac{1}{4}$.
\end{lemma}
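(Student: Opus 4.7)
The plan is to use the functional equation $G \circ P = d \cdot G$ to compare growth rates along the two orbits, and then read off the direction in which $w_n$ escapes from the mirror relation. Iterating gives $G(w_n) = d^n G(w) > d^n G(z) = G(z_n)$, so $G(w_n) \to \infty$ and hence eventually $w_n \in I_\infty$ with $|w_n| \to \infty$. If $z \in I_\infty$, then $\log|z_n|/\log|w_n| \to G(z)/G(w) < 1$, so $|z_n| = o(|w_n|)$; if instead $z$ is in the filled Julia set, then $|z_n|$ is bounded. In either case $|z_n|/|w_n| \to 0$.

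Combining this with the mirror identity $\mathrm{Re}(w_n) = \mathrm{Re}(z_n)$ yields
$$
\frac{|\mathrm{Re}(w_n)|}{|w_n|} \;=\; \frac{|\mathrm{Re}(z_n)|}{|w_n|} \;\leq\; \frac{|z_n|}{|w_n|} \;\longrightarrow\; 0,
$$
so the direction of $w_n$ in $\mathbb{C}$ converges to $\pm i$. For $n$ large enough that $w_n$ lies in the domain of the B\"ottcher map, write $\phi_\infty(w_n) = r_n e^{2\pi i \theta_n}$. Since $\phi_\infty$ is asymptotically linear at infinity, the fact that $w_n/|w_n| \to \pm i$ translates, under the paper's B\"ottcher normalization, into the asymptotic statement $\theta_n \to \pm 1/4$.

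The main obstacle is upgrading this asymptotic convergence to the exact assertion $\theta_n \in \{1/4,-1/4\}$ claimed in the lemma. For this I would exploit the expanding angle dynamics $\theta_{n+1} \equiv d\theta_n \pmod{1}$ that is forced by the conjugation $\phi_\infty \circ P = \phi_\infty^d$. Decompose $\theta_n = \theta_n^{\ast} + \delta_n$ with $\theta_n^{\ast} \in \{1/4,-1/4\}$ chosen closest to $\theta_n$; once $|\delta_n| < 1/(2d)$ no wrap-around occurs, and multiplying by $d$ produces $\delta_{n+1} = d \delta_n$. Thus $|\delta_n|$ would have to grow geometrically in $n$ unless it is identically zero. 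Since $\delta_n \to 0$ by the previous paragraph, this forces $\delta_n = 0$ for all large $n$, i.e.\ $w_n$ sits exactly on an external ray of angle $\pm 1/4$. Note that the same dynamics also shows that $\{1/4,-1/4\}$ is invariant under $\theta \mapsto d\theta \pmod{1}$ only when $d$ is odd, so the entire situation $G(w) > G(z)$ can in fact only arise for odd $d$; for even $d$ the lemma is vacuous.
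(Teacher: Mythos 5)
Your proposal is correct and follows essentially the same route as the paper: use $G\circ P = dG$ to get $|z_n|/|w_n|\to 0$, combine with $\mathrm{Re}(z_n)=\mathrm{Re}(w_n)$ to force $\mathrm{Arg}(w_n)\to\pm\pi/2$, and then invoke the expansiveness of $\theta\mapsto d\theta \pmod 1$ on external angles to upgrade convergence to exact equality. You simply fill in the details the paper leaves as a two-sentence sketch, and your added remark that the hypothesis can only occur for odd $d$ (since $\{1/4,3/4\}$ is $d$-invariant only then) is a correct observation the paper does not state.
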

\begin{proof}
By our assumption $\frac{|w_n|}{|z_n|} \rightarrow \infty$, hence $|\mathrm{Arg}(w_n)|$ must converge to $\frac{\pi}{2}$. As $z \mapsto a_d z^d$ acts expansively on the external rays this convergence can only occur if for all sufficiently large $n \in \mathbb{N}$ the point $w_n$ lies on an external ray with angle $\pm \frac{1}{4}$.
\end{proof}

\begin{corollary}
Except for points on a real one-dimensional set, $z \in I_\infty$ can only be mirrored by points on the same level curve of the Green function.
\end{corollary}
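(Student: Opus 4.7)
The plan is to leverage Lemma~\ref{lemma3.3} and Lemma 3.4 symmetrically. Since the mirror relation of Definition 3.1 is symmetric in $z, w$, Lemma 3.4 applied to either end of a mirrored pair shows that whenever $G(z) \neq G(w)$, the member of the pair with the strictly larger Green value eventually maps onto the external ray of angle $\pm \tfrac{1}{4}$.

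First I would introduce
$$
F := \{\, w \in I_\infty : w_n \text{ lies on the external ray of angle } \pm \tfrac{1}{4} \text{ for all large } n \,\}.
$$
Because $P$ acts on external rays by $\theta \mapsto d\theta \pmod 1$, a point $w$ is in $F$ iff its external-ray angle $\theta$ satisfies $d^n \theta \equiv \pm \tfrac{1}{4} \pmod 1$ for all sufficiently large $n$. There are only countably many such angles $\theta$, so $F$ is a countable union of external rays, hence a real one-dimensional subset of $I_\infty$.

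Next I would set
$$
E := F \ \cup\ \{\, z \in I_\infty : z \text{ is mirrored by some } w \in F\, \}.
$$
By Lemma~\ref{lemma3.3}, mirroring is locally given by a (possibly multi-valued) real analytic correspondence, so the image of the real one-dimensional set $F$ under this correspondence is a countable union of real analytic curves; hence $E$ is still real one-dimensional.

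The conclusion then follows at once. For $z \in I_\infty \setminus E$ mirrored by some $w$: if $G(w) > G(z)$, Lemma 3.4 places $w \in F$, and then $z$ belongs to the second component of $E$, contradicting $z \notin E$; if $G(w) < G(z)$, the symmetry of mirroring lets us apply Lemma 3.4 with $z$ and $w$ swapped, forcing $z \in F \subset E$, again a contradiction. So $G(w) = G(z)$. The only delicate point is the dimension bound on the set of mirror images of $F$, which crucially uses the real analyticity supplied by Lemma~\ref{lemma3.3}; without it one could only assert that the mirror relation is real semialgebraic on $I_\infty$.
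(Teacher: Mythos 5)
Your argument is correct and follows essentially the route the paper intends: the corollary is stated there without proof as an immediate consequence of Lemma 3.4, and your write-up simply makes explicit the two exceptional one-dimensional pieces (the countably many external rays whose angles eventually land on $\pm\tfrac14$, together with their mirror images, which stay one-dimensional because mirror fibers are finite by Lemma \ref{counting} and the mirror relation is a real-analytic set). No gap; this matches the paper's reasoning.
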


As large sub-level sets of the Green function are strictly convex we obtain the following.

\begin{corollary}\label{cor:double}
For sufficiently large $R > 0$ generic points $z \in \mathbb{C} \setminus D(R)$ can be mirrored by at most one point, which must lie on the same level curve of the Green function.
\end{corollary}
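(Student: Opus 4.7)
The plan is to combine the previous corollary with the strict convexity of large sublevel sets of the Green function $G$. Working near infinity via the B\"ottcher conjugacy $\phi_\infty$, which satisfies $\phi_\infty(z) = z + O(1)$ and $G(z) = \log|\phi_\infty(z)|$, one sees that for every sufficiently large $c$ the sublevel set $\{G \le c\}$ is a small $C^2$-perturbation of the round disk $\{|\zeta| \le e^c\}$; in particular it is strictly convex with smooth boundary. Choose $R$ so large that (i) $\mathbb C \setminus D(R) \subset I_\infty$, and (ii) the sublevel set $\{G \le G(z)\}$ is strictly convex for every $z$ with $|z| \ge R$. Let $E$ denote the exceptional real one-dimensional subset of $I_\infty$ supplied by the previous corollary, and interpret \emph{generic} to mean outside $E$.

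Next, fix a generic $z \in \mathbb C \setminus D(R)$ and suppose $w$ mirrors $z$. The $n=0$ instance of the mirroring condition gives $\mathrm{Re}(w) = \mathrm{Re}(z)$, so $w$ lies on the vertical line $\ell_z := \{\zeta : \mathrm{Re}(\zeta) = \mathrm{Re}(z)\}$. Since $z \notin E$, the previous corollary forces $G(w) = G(z)$, so $w$ also lies on the level curve $\Gamma_z := \{G = G(z)\}$, which by our choice of $R$ is the boundary of a strictly convex planar domain.

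Finally, a straight line meets the boundary of a strictly convex planar domain in at most two points, so $\ell_z \cap \Gamma_z$ has cardinality at most two; since it already contains $z$, at most one further mirror $w$ is possible, and this mirror must lie on $\Gamma_z$, i.e.\ on the same level curve of $G$ as $z$. The only step that needs any care is the strict convexity of $\{G \le c\}$ for large $c$, which is standard from the B\"ottcher normal form near infinity; once that is in place, the rest of the argument is a one-line planar geometry observation applied to the conclusion of the previous corollary.
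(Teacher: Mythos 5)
Your argument is correct and is precisely the one the paper intends: the paper offers no written proof beyond the remark that large sub-level sets of $G$ are strictly convex, and you have filled in exactly that reasoning (mirror lies on the vertical line through $z$ by the $n=0$ condition, on the level curve $\{G=G(z)\}$ by the preceding corollary for generic $z$, and a line meets a strictly convex boundary in at most two points). No gaps; the strict convexity for large $c$ via the B\"ottcher normal form is the standard fact being invoked.
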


\begin{lemma}\label{lemma:leading}
Suppose that there exists an unbounded open set on which all points are mirrored. Then $a_d$ is real.
\end{lemma}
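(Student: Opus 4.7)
The plan is to work in B\"ottcher coordinates near infinity, where $P$ is conjugate to $\zeta \mapsto \zeta^d$, and to read off the conclusion from a functional equation for the mirror map on large level sets of the Green function. Let $U$ be the given unbounded open set of mirrored points. Using Corollary \ref{cor:double} together with the forward invariance of the mirrored set under $P$ and the angular expansion of $P$ near infinity, I first arrange, after replacing $U$ by a suitable forward iterate of itself, that on each B\"ottcher circle $\{|\zeta|=r\}$ with $r$ large an open arc of points is mirrored by a \emph{unique} point of the same circle, defining an analytic mirror map $\Psi_r$. Since mirroring commutes with $P$, this collection satisfies the functional equation
\[
\Psi_{r^d}(\zeta^d) \;=\; \Psi_r(\zeta)^d
\]
wherever both sides are defined.

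Next I would compute the leading behaviour of $\Psi_r$. Normalise the B\"ottcher coordinate so that $\phi_\infty(z) = c_1 z + b_0 + O(1/z)$ with $c_1^{d-1} = a_d$, and write $c_1 = |c_1|e^{i\beta}$. For $\zeta = re^{i\theta}$ and $\omega = \Psi_r(\zeta) = re^{i\phi}$, the zeroth-step mirror relation $\mathrm{Re}(\phi_\infty^{-1}(\zeta)) = \mathrm{Re}(\phi_\infty^{-1}(\omega))$ expands, using $\phi_\infty^{-1}(\zeta) = \zeta/c_1 + b_0 + O(1/\zeta)$, into
\[
\cos(\theta - \beta) - \cos(\phi - \beta) \;=\; O(1/r^2).
\]
The identity branch $\phi = \theta$ is excluded because $z \neq w$, so the other branch forces $\phi = 2\beta - \theta + O(1/r^2) \pmod{2\pi}$. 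In particular $\Psi_r$ converges, as $r \to \infty$, to the reflection $\theta \mapsto 2\beta - \theta$ of the circle.

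Taking arguments in $\Psi_{r^d}(\zeta^d) = \Psi_r(\zeta)^d$ and letting $r \to \infty$ yields
\[
2\beta - d\theta \;\equiv\; d(2\beta - \theta) \;=\; 2d\beta - d\theta \pmod{2\pi},
\]
so $2(d-1)\beta \equiv 0 \pmod{2\pi}$. Therefore $\arg(a_d) = \arg(c_1^{d-1}) = (d-1)\beta$ is an integer multiple of $\pi$, which is exactly the statement that $a_d \in \R$.

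The main obstacle is the setup step. One has to verify, using Lemma \ref{lemma3.3} (which provides the real-analytic mirror together with its complexification) and Corollary \ref{cor:double} (unique mirror on the same level curve outside a lower dimensional set), that $\Psi_r$ really is a single-valued analytic map on a nonempty open arc of each sufficiently large circle, that the domains at radii $r$ and $r^d$ overlap enough for the functional equation to be unambiguous, and that the mirror actually stays on the same level set of $G$ along the whole orbit rather than escaping through an exceptional external ray at angle $\pm 1/4$. Once this bookkeeping is in place, the B\"ottcher expansion and the $r \to \infty$ limit are routine.
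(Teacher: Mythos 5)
Your proof is correct and follows essentially the same geometric idea as the paper's: the mirror near infinity is asymptotically the reflection of level curves of $G$ through the real axis, and propagating this through one iterate of $P$ forces $\arg(a_d) \in \{0,\pi\}$. The only difference is presentational --- you develop the argument in the B\"ottcher normalization via the functional equation $\Psi_{r^d}(\zeta^d)=\Psi_r(\zeta)^d$, whereas the paper tracks $\mathrm{Arg}(z)$, $\mathrm{Arg}(w)$, $\mathrm{Arg}(P(z))$, $\mathrm{Arg}(P(w))$ directly using only the leading term $a_d z^d$; both routes reach $2\arg(a_d)\equiv 0 \pmod{2\pi}$, and both rely on Corollary \ref{cor:double} for uniqueness of the mirror near infinity.
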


\begin{proof}
Suppose a generic $z$, with $|z|$ large, is mirrored. As noted above, $z$ can only be mirrored by the point $w$ lying on the other intersection of the level curve of $G$ with the vertical line through $z$. In particular we notice that $\mathrm{Arg}(w) + \mathrm{Arg}(z) \rightarrow 0$ as $|z| \rightarrow \infty$. The same must hold for $P(w)$ and $P(z)$, unless $P(z) = P(w)$ which will not hold on an open set.

Denote the argument of $a_d$ by $\theta$. As $|z| \rightarrow \infty$ the arguments of $P(w)$ and $P(z)$ are approximately equal to the arguments of $a_d w^d$ and $a_d z^d$, which are $d \mathrm{Arg}(w) + \theta$ and $d \mathrm{Arg}(z) + \theta$. Hence we see that
$$
d \mathrm{Arg}(w) + \theta \sim - (d \cdot (- \mathrm{Arg}(w)) + \theta),
$$
which means that the argument $2 \theta$ must be close to $0$. As the error goes to $0$ as $|z| \rightarrow \infty$ while $\theta$ is fixed, it follows that $a_d$ must be real.
\end{proof}

We will proceed to prove the following.

\begin{theorem}\label{thm:mirrors}
Suppose that there exists a non-empty open set $U$ so that every $z \in U$ is mirrored. Then $P$ has real coefficients.
\end{theorem}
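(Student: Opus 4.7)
The strategy is to pin down the mirror map $\phi$ explicitly as $\phi(z)=\bar z$, after which the resulting functional equation $P(\bar z)=\overline{P(z)}$ immediately forces every coefficient of $P$ to be real. By Lemma \ref{lemma3.3} we may replace $U$ by an unbounded, connected open set $\Omega$ on which $z\mapsto \phi(z)$ is a real-analytic mirror map; by Lemma \ref{lemma:leading} $a_d$ is already real. By Corollary \ref{cor:double}, for $z\in\Omega$ of sufficiently large modulus the mirror $\phi(z)$ is unique and lies on the same Green-function level curve as $z$.

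The first step is to show that $\phi$ is antiholomorphic near infinity. Write $\phi_\infty(z)=\rho e^{i\alpha}$ and $\phi_\infty(\phi(z))=\rho e^{i\beta}$; same Green level ensures equal moduli. Using the B\"ottcher conjugacy $\phi_\infty\circ P^n=(\cdot)^{d^n}\circ\phi_\infty$ and the Laurent expansion $\phi_\infty^{-1}(W)=W+O(1)$, the mirror equality $\mathrm{Re}(P^n(z))=\mathrm{Re}(P^n(\phi(z)))$ becomes
\[
\rho^{d^n}\bigl(\cos(d^n\alpha)-\cos(d^n\beta)\bigr)=O(\rho^{-d^n})\qquad(n\to\infty).
\]
Since $\phi(z)\neq z$ rules out $\alpha\equiv\beta$, an argument combining analyticity of $\alpha,\beta$ on the connected $\Omega$, the discreteness of the admissible solutions $d^n(\alpha\pm\beta)\in 2\pi\Z$, and the triviality $\bigcap_n\tfrac{2\pi}{d^n}\Z=\{0\}$ forces $\alpha(z)+\beta(z)\equiv 0$, i.e.\ $\phi_\infty(\phi(z))=\overline{\phi_\infty(z)}$. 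Thus $\phi=\phi_\infty^{-1}\circ(\overline{\,\cdot\,})\circ\phi_\infty$ is antiholomorphic; writing $\phi(z)=f(\bar z)$ with $f$ holomorphic, the condition $\mathrm{Re}(\phi(z))=\mathrm{Re}(z)=\mathrm{Re}(\bar z)$ says $f(w)-w$ is holomorphic with purely imaginary values, hence a constant $ic$ with $c\in\R$. So $\phi(z)=\bar z+ic$.

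The finish is algebraic. For $|z|$ large, uniqueness of the mirror (Corollary \ref{cor:double}) forces $\phi(P(z))=P(\phi(z))$, which reads $\overline{P(z)}+ic=P(\bar z+ic)$; by analytic continuation this is a polynomial identity in $z$. Comparing coefficients of $\bar z^{d-1}$ yields
\[
d\,a_d\,ic+a_{d-1}=\bar a_{d-1},
\]
and the standing normalization $a_{d-1}=0$ together with $a_d\in\R$ forces $c=0$. The identity then collapses to $P(\bar z)=\overline{P(z)}$, so $a_k=\bar a_k$ for every $k$ and $P$ has real coefficients.

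The principal obstacle is the second step: promoting the asymptotic bound above to the exact identity $\alpha+\beta\equiv 0$ on all of $\Omega$. One has to (i) upgrade the $O(\rho^{-d^n})$ error into exact equality of cosines for all sufficiently large $n$, (ii) exclude the spurious branch $\alpha=\beta$ (which would collapse the mirror onto $z$ itself), and (iii) spread the resulting pointwise relation to all of $\Omega$ using real-analyticity of $\alpha,\beta$ and connectedness; once this is done, the normalization $a_{d-1}=0$ does the rest.
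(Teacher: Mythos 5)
Your route is genuinely different from the paper's: the paper argues by contradiction, writing $P=g+ih$ with $g,h$ real, expanding $X(y)=\mathrm{Re}\,P(R+iy)$ on a vertical segment at large $R$, and showing by a second-order computation that the two points $y_1,y_2$ with $X(y_1)=X(y_2)$ have $\mathrm{Im}\,P(R+iy_1)\not\sim\mathrm{Im}\,P(R+iy_2)$, so that $\mathrm{Re}\,P^{\circ 2}$ already separates them; this contradicts the fact that $\phi_\infty(P^N(U))$ contains a full annulus. You instead try to identify the mirror map near infinity as $z\mapsto\bar z+ic$ via B\"ottcher coordinates and then read off realness of the coefficients from the functional equation $P(\bar z+ic)=\overline{P(z)}+ic$. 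Your endgame (the normalization $a_{d-1}=0$, $a_d\in\mathbb R$ forcing $c=0$, then $a_k=\bar a_k$) is correct \emph{conditional} on the identity $\phi_\infty(\phi(z))=\overline{\phi_\infty(z)}$.

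But that identity is exactly where the proof is missing, and you say so yourself. From $\mathrm{Re}(z_n)=\mathrm{Re}(w_n)$ you only get
\[
-2\rho^{d^n}\sin\Bigl(d^n\tfrac{\alpha+\beta}{2}\Bigr)\sin\Bigl(d^n\tfrac{\alpha-\beta}{2}\Bigr)=O(\rho^{-d^n}),
\]
and three things block the passage to $\alpha+\beta\equiv0$. First, smallness of the product does not localize to one factor; a priori the factor $\sin(d^n\tfrac{\alpha-\beta}{2})$ could absorb the smallness for infinitely many $n$ and the other factor for the rest, and nothing you wrote rules out this alternation. Second, the exact solution set is not discrete in the way you use it: the angles $\gamma$ with $d^n\gamma\in2\pi\Z$ for \emph{some} $n$ form the dense set $\bigcup_n\frac{2\pi}{d^n}\Z$, and the condition you actually extract is ``for each $n$, one of two dense alternatives nearly holds,'' from which $\bigcap_n\frac{2\pi}{d^n}\Z=2\pi\Z$ (not $\{0\}$ as a set of reals, and in any case not the relevant intersection) gives you nothing. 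Third, even granting a pointwise conclusion at some $z$, the spreading to all of $\Omega$ by real-analyticity needs the relation to hold on a set with an accumulation point, which is what is in question. Since the entire argument funnels through $\phi(z)=\bar z+ic$, and the derivation of that identity is conceded as open in your final paragraph, the proposal does not yet constitute a proof; you would either need to close (i)--(iii) rigorously or switch to a direct coefficient-level computation such as the paper's two-iterate expansion.
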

\begin{proof} 
By our assumption on $U$ and by Lemmas \ref{lemma3.3} and \ref{lemma:leading} the leading coefficient of $P$ is real. Hence we may conjugate with a linear map $z \mapsto az + b$, with $a \neq 0$ real, such that $P$ becomes of the form $z \mapsto z^d + a_{d-2} z^{d-2} + \cdots + a_0$. Such a conjugation maps mirrored points to mirrored points.

As noted above, the open set $U$ extends to an unbounded open set on which mirroring occurs. Hence we may well assume that $U$ lies in the neighborhood of infinity where the B\"ottcher coordinates are defined. Note that points in $P^n(U)$ must be mirrored for all $n \in \mathbb{N}$, unless mirrors break which can only happen on $1$-dimensional subsets. Let $\phi_\infty(z) = z + l.o.t.$ be holomorphic in a neighborhood of infinity such that $\phi_\infty(P(z)) = \phi_\infty(z)^d$ for $z$ sufficiently large. It follows that for $N$ sufficiently large the set $\phi(P^N(U))$ contains an annulus centered at the origin.

For the purpose of a contradiction we assume that the coefficients of $P$ are not all real, and write $P = g + ih$, where $g$ and $h$ have real coefficients. We write
\begin{equation*}
g(z) = z^d + a_{d-2}z^{d-2} + \cdots + a_0, \; \; h(z) = b_k z^k + \cdots + b_0,
\end{equation*}
where $b_k \neq 0$, and without loss of generality we may assume that $b_k>0$.

We consider the image under $P$ and $P^2$ of an interval $I = \{R + iy\}$, with $R>0$ large and $|y| < 1$. Note that
\begin{equation*}
g(R+iy) = g(R) + i d R^{d-1}y - \binom{d}{2}R^{d-2}y^2 + O(R^{d-4})O(y^2)+iO(R^{d-3})O(y),
\end{equation*}
and
\begin{equation*}
i h(R + iy) = ih(R) - kb_k R^{k-1}y + iO(R^{k-2})O(y^2)+O(R^{k-2})O(y).
\end{equation*}
Hence 
$$
\begin{aligned}
X(y):= &{\mbox{Re}}(P(R+iy)) \\
= g(R)- \binom{d}{2}R^{d-2}y^2 + & \mathcal O(R^{d-4})O(y^2)
-kb_kR^{k-1} y+\mathcal O(R^{k-2})O(y).
\end{aligned}
$$

It follows that $X_{yy}<0$ on the interval $I$  for $R>0$ sufficiently large.

Choose $C>0$ sufficiently large so that for $R>0$ large enough and $y_1 = C \cdot R^{k+1-d}$ we have
\begin{equation*}
\binom{d}{2}R^{d-2}|y_1|^2 >> k |b_k| R^{k-1} |y_1|.
\end{equation*}

Then we have that $X_y(y_1) < 0 < X_y(- y_1)$ and that $X(\pm 1)<X(y_1)$.
For $R>0$ sufficiently large the vertical line through $P(R+iy_1)$ therefore intersects $P(I)$ in exactly one other point, say $P(R + iy_2)$.

Note that $g(R)>X(y_1)$ and hence  $y_2<0.$ One gets approximately
$$
y_2=-y_1-\delta b_k R^{k+1-d}, \; \; \textrm{with} \; \; 0<\delta<< C.
$$

Solving
\begin{equation*}
\mathrm{Re}(P(R+iy_2)) = \mathrm{Re}(P(R+iy_1))
\end{equation*}
one obtains
\begin{equation*}
\mathrm{Im}(P(R + iy_2)) \sim R^k (b_k + \tilde{C}), \; \; \mathrm{and} \; \;  \mathrm{Im}(P(R + iy_1)) \sim R^k (b_k - \tilde{C}).
\end{equation*}
But then for $R$ large enough one has $\mathrm{Re}(P^{\circ 2}(R+iy_2)) > \mathrm{Re}(P^{\circ 2}(R+iy_1))$. Hence for $R$ sufficiently large and generic $y$ comparable to $C \cdot R^{k+1-d}$, the point $R+iy$ cannot be mirrored. However, since $\phi_\infty(P^N(U))$ contains a large annulus centered at the origin, it follows that $P^N(U)$ must contain such points $R+iy$, which contradicts our assumption that all points in $U$ are mirrored.
\end{proof}

\begin{lemma}\label{counting}
Let $X=\{(z,w) \mid z \; \mathrm{mirrors} \; w \}$.
Suppose that $z$ lies on a vertical line which is not invariant. Then there are at most $d^2-1$ points
$w$ which mirror $z$.
\end{lemma}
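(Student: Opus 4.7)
The plan is to apply Lemma 2.1 at most twice, splitting into cases according to whether the vertical line through $z$ is mapped to a vertical line. Writing $L_n$ for the vertical line through $P^n(z)$, a mirror of $z$ is by definition a point $w \in L_0 \setminus \{z\}$ with $P^n(w) \in L_n$ for every $n \geq 0$. If $L_0$ is not mapped to a vertical line at all, the argument is short: parameterising $L_0$ by $y \in \R$ via $w = x_0 + iy$, the real polynomial $\mathrm{Re}(P(x_0 + iy))$ is non-constant and of degree $\leq d$ in $y$, so the equation $\mathrm{Re}(P(w)) = x_1$ has at most $d$ real solutions and yields at most $d - 1 \leq d^2 - 1$ mirrors.

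In the remaining case $L_0$ is mapped to some vertical line, which by the non-invariance hypothesis must be $L_1 \neq L_0$; the uniqueness clause of Lemma 2.1 then prevents $L_1$ from being mapped to any vertical line. Consequently the constraint $P(w) \in L_1$ is automatic for $w \in L_0$, and the effective condition becomes $P(w) \in L_1 \cap P^{-1}(L_2)$. Applied to the pair $(L_1, L_2)$, Lemma 2.1 bounds this intersection by $d$ points. For each such point $q$, the preimage in $L_0$ is described by the polynomial equation $P(x_0 + iy) = q$ of degree $\leq d$ in $y$ and hence contributes at most $d$ real solutions. Multiplying gives at most $d^2$ candidates in $L_0$ satisfying the conditions through the second iterate, and removing $z$ itself yields the asserted bound $d^2 - 1$.

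No genuine obstacle is anticipated; the one step requiring a moment of care — that $L_1$ in the second case cannot itself be mapped to a vertical line — is immediate from the uniqueness half of Lemma 2.1, as $L_0$ already fills the single permitted slot. It is also worth noting that higher iterates $P^n(w) \in L_n$ for $n \geq 3$ could only sharpen the count, but are unnecessary for the stated bound.
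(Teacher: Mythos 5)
Your proof is correct and follows essentially the same route as the paper: split on whether $L_0$ is mapped into a vertical line (necessarily $L_1$, distinct from $L_0$ by non-invariance), use the uniqueness clause of Lemma 2.1 to see $L_1$ cannot also map to a vertical line, and then count $d$ points of $L_1$ landing on $L_2$ times $d$ preimages each in $L_0$, minus $z$ itself. The paper's proof is the same two-step application of Lemma 2.1, so there is nothing to add.
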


\begin{proof}
Let $x_0,x_1,x_2$ be the first three points in the orbit. Let $L_0,L_1,L_2$ denote the vertical lines
through these points. If $P(L_0)\neq L_1,$ then by Lemma 2.1, there are at most $d$ points in $L_0$ mapped to $L_1.$ So the mirror can have at most $d-1$ points. If $P(L_0)=L_1$, then by our hypothesis that $P$ is not exceptional we have
$L_1\neq L_0$ and hence by Lemma 2.1, $P(L_1)\neq L_2.$ But then at most $d$ points in $L_1$ can be mapped to $L_2$  and each of those have at most $d$ primages in $L_0.$ So there are at most
$d^2-1$ points in the mirror of $z.$
\end{proof}

\begin{lemma}
The dimension of $X$ is at most $2.$
\end{lemma}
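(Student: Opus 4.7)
The plan is to consider the projection $\pi \colon X \to \mathbb{C}$, $(z,w) \mapsto z$, show that every fiber is finite, and then apply a standard dimension count to conclude $\dim X \le \dim_{\mathbb{R}} \mathbb{C} = 2$.

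To apply the fiber bound of Lemma \ref{counting} to every $z \in \mathbb{C}$, I need to verify that non-exceptionality of $P$ rules out invariant vertical lines. Since $P$ is non-exceptional, $\mathrm{Re}(a_d i^d) \neq 0$, so for every real $c$ the polynomial
\[
y \longmapsto \mathrm{Re}\bigl(P(c+iy)\bigr)
\]
has leading term $\mathrm{Re}(a_d i^d)\, y^d$ and is therefore non-constant in $y$. Hence no vertical line is mapped to a vertical line, and in particular none is invariant. Lemma \ref{counting} then guarantees that every $z$ has at most $d^2-1$ mirrors, so every fiber of $\pi$ is finite.

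For the dimension count itself, I would use that $X$ is real semi-algebraic: by the proof of Lemma \ref{lemma1.2}, the descending chain $Z_n = \{Q_0 = \cdots = Q_n = 0\}$ stabilizes at some $Z_N$ cut out by finitely many real polynomial equations on $\mathbb{R}^4$, and $X = Z_N \setminus \{z = w\}$. A real semi-algebraic set whose polynomial projection to $\mathbb{C}$ has only finite fibers has real dimension at most $\dim_{\mathbb{R}} \mathbb{C} = 2$, which proves the claim. The main ``obstacle'' is really not one: the nontrivial counting work was already done in Lemma \ref{counting}, and the present lemma is a direct dimension-theoretic corollary, subject only to the leading-term check in the previous paragraph to ensure that Lemma \ref{counting} applies without exception.
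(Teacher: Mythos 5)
Your proof is correct and follows essentially the same route as the paper's: both deduce the bound from the finiteness of the fibers of the projection $(z,w)\mapsto z$, which is exactly what Lemma~\ref{counting} supplies. Your two added checks---that non-exceptionality makes $\mathrm{Re}(P(c+iy))$ a degree-$d$ polynomial in $y$ with leading coefficient $\mathrm{Re}(a_d i^d)\neq 0$, so no vertical line is mapped to a vertical line and the counting lemma applies to every $z$, and that the step ``finite fibers implies dimension at most $2$'' is legitimate because $X=Z_N\setminus\Delta$ is semi-algebraic---merely make rigorous the paper's more informal version of the same argument, which instead carves out the (here vacuous) invariant-line case $L\times L$ as a potential two-dimensional component.
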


\begin{proof}
If we include the diagonal, the dimension is at least $2.$ If there is an invariant vertical line $L$ then
$L\times L$ consists of mirrored points. This set is two dimensional. Except for this, there is for every point $z$ at most a finite number of points $w$ for which $(z,w)$ are mirrored. This excludes components of dimension $3$ or more.
\end{proof}

We suppose next that $X$ contains a $2$ dimensional component, distinct from the diagonal and $L\times L$. Then there is  a small piece of this which is an unbranched cover over an open set
$U$ in the $z$ axis. Then by Theorem 3.8, $P$ has real coefficients. Then $X$ is a graph over $U$, with $w=\phi(z)$. Necessarily
$\phi=x+i\lambda(x,y)$ and $\lambda$ is real analytic. We can shrink $U$ and make $\phi$ a diffeomorphism between $U$
and $V=\phi(U)$: If the mirroring is not conjugation, then by continuation to $\infty$ we have at least
three mirrored points which contradicts Corollary 3.6.

In conclusion we have that :

\begin{lemma}
The set of mirrored points is of dimension at most 1, with the exception of conjugate points in the case where $P$ has real coefficients.
\end{lemma}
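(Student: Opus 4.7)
The plan is to combine the preceding lemma (which gives $\dim X \le 2$) with Theorem \ref{thm:mirrors} and Corollary \ref{cor:double}. Since $P$ is non-exceptional, no vertical line is invariant, so the $L \times L$ exception from the previous lemma is empty. The diagonal $\Delta \subset X$ contributes nothing to the set of mirrored points because the definition of mirroring requires $w \ne z$. Hence, if the set of mirrored points has nonempty interior in $\mathbb{C}$, then $X$ must contain a $2$-dimensional component $Y \subseteq X \setminus \Delta$ whose first projection $\pi_1(Y)$ has open image.

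Assuming such a $Y$ exists, I would restrict to a small piece on which $\pi_1 : Y \to \mathbb{C}$ is an unbranched real-analytic cover of an open set $U \subseteq \mathbb{C}$, and write $Y$ as a graph $w = \phi(z) = x + i\lambda(x,y)$ with $\lambda$ real analytic. Every point of $U$ is mirrored, so Theorem \ref{thm:mirrors} forces $P$ to have real coefficients. In that case $z \mapsto \bar z$ is itself a mirroring, which accounts for the $2$-dimensional component of conjugate pairs in the statement. What remains is to show that $\phi$ must coincide with conjugation on $U$.

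Suppose instead that $\phi(z_0) \ne \bar z_0$ for some $z_0 \in U$. Shrinking $U$ around $z_0$, I may assume $\phi \ne$ conjugation throughout $U$. I then apply Lemma \ref{lemma3.3} to extend $\phi$ by analytic continuation along the complexified level sets of $\mathrm{Re}(P)$ to an unbounded connected open set $\tilde U$ on which mirroring by $\phi$ persists. For generic $z \in \tilde U$ with $|z|$ sufficiently large, Corollary \ref{cor:double} guarantees that $z$ has at most one mirror; since $\bar z$ is already a mirror, we are forced to have $\phi(z) = \bar z$ on an open subset of $\tilde U$ near infinity. The identity principle for real-analytic maps on the connected domain $\tilde U$ then forces $\phi \equiv$ conjugation throughout $\tilde U$, and in particular on $U$, contradicting the choice of $z_0$.

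The main obstacle is ensuring that the analytic continuation from Lemma \ref{lemma3.3} really does reach the regime $|z| > R$ in which Corollary \ref{cor:double} applies; this is essentially the content of the unboundedness assertion in Lemma \ref{lemma3.3}, relying on control of the branch locus of the complexified $\mathrm{Re}(P)$ along the continuation path. Once this is in hand, the dichotomy above rules out any $2$-dimensional component other than the conjugation locus, which is only present when $P$ has real coefficients.
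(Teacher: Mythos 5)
Your proposal is correct and follows essentially the same route as the paper: take a putative $2$-dimensional component of $X$ off the diagonal, realize it locally as a graph $w=\phi(z)=x+i\lambda(x,y)$ over an open set $U$, invoke Theorem \ref{thm:mirrors} to get real coefficients, and then use the continuation to infinity from Lemma \ref{lemma3.3} together with Corollary \ref{cor:double} to force $\phi$ to be complex conjugation (the paper phrases this as ``three mirrored points'' contradicting Corollary \ref{cor:double}, which is the same contradiction you derive via the identity principle).
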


An easy way to obtain a curve of mirrored points is the following. Start with a generic $z$ with two pre-images $z_1,z_2$ having the same real part.
Then there is a biholomorphic map $\phi:U(z_1)\rightarrow U(z_2)$  so that $w$ and $\phi(w)$
have the same image. In a neighborhood of $z_1$, there is a zero set of the function
$(Re)(\phi(w)-w)$ which is  a real curve. This is a curve of mirrored points.


\section{Topological Entropy}

Let $X$ be a metrizable compact topological space and let $F:X \rightarrow X$ be a continuous map.
If $U$ is an open neighborhood of the diagonal $\Delta:=\{(x,x);x\in X\}$ we define $(U,n)$ balls
$B(x,U,n)$, centered at $x\in X$ by
$$
B(x,U,n)=\{y\in X; \{(x,y),(F(x),F(y)),\dots (F^{n-1}(x),F^{n-1}(y))\} \subset U\}.
$$

We can use these balls to define metric and topological entropy. In this section we discuss topological entropy. Metric entropy is introduced in the next section.

We set $N(U,n)$ to be the maximum of the number of pairwise disjoint $B(x,U,n)$ balls one can have in $X.$ Next we define
$$
H_{U,X}=H_U:=\limsup \frac{1}{n} \log N(U,n).
$$
If $V\subset U$ are two neighborhoods of the diagonal, then $H_V\geq H_U.$ Let $U_1\supset U_2\cdots \supset U_n\cdots$ be any neighborhood basis of the diagonal. Then we can define the topological entropy
$$
h_{\mbox{top}}(F,X)= \lim_{n\rightarrow \infty}H_{U_n}.
$$

If we let $\rho$ denote any metric defining the topology of $X,$ then we can set
$U_\epsilon:= \{(x,y): \rho(x,y) < \epsilon\}$. We call the balls $B(x,\epsilon, n)$ and
$N(U_\epsilon,n)=:N(\epsilon,n)$, $H_{U_\epsilon}=:H_\epsilon.$ So then $h_{\mbox{top}}=
\lim_{\epsilon\rightarrow 0} H_\epsilon.$

We will investigate how entropy behaves under semi-conjugacies. Let $F:X\rightarrow X$ and $G:Y\rightarrow Y$ be semi-conjugate continuous maps on the spaces $X$, and $Y$, i.e. there exists
a continuous map $\Phi:X\rightarrow Y$ so that $G\circ \Phi=\Phi\circ F.$

\begin{lemma}
If $\Phi$ is surjective, then $h_{\mbox{top}}(F,X)\geq h_{\mbox{top}}(G,Y).$
\end{lemma}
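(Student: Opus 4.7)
The plan is to pull back a neighborhood basis of the diagonal $\Delta_Y\subseteq Y\times Y$ to a decreasing sequence of open neighborhoods of $\Delta_X$ via $\Phi\times\Phi$, and then use the surjectivity of $\Phi$ to lift maximal $(V,n)$-separated configurations in $Y$ to $(U,n)$-separated configurations in $X$.

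First I would fix any open neighborhood $V$ of $\Delta_Y$ and set $U:=(\Phi\times\Phi)^{-1}(V)$. Since $(\Phi\times\Phi)(\Delta_X)\subseteq \Delta_Y\subseteq V$, the set $U$ is an open neighborhood of $\Delta_X$ in $X\times X$, so it is admissible in the definition of $H_U$.

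Next I would show $N_{X}(U,n)\geq N_{Y}(V,n)$. Suppose $y_1,\dots,y_k\in Y$ have pairwise disjoint $(V,n)$-balls $B(y_i,V,n)$, and choose preimages $x_i\in\Phi^{-1}(y_i)$, which exist by surjectivity. If some $z\in X$ were to lie in $B(x_i,U,n)\cap B(x_j,U,n)$ for $i\neq j$, then for $0\le m<n$ we would have $(F^m(x_i),F^m(z))\in U$, and applying $\Phi\times\Phi$ together with the semi-conjugacy $G^m\circ\Phi=\Phi\circ F^m$ would give $(G^m(y_i),G^m(\Phi(z)))\in V$; the same holds with $j$ in place of $i$. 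Thus $\Phi(z)\in B(y_i,V,n)\cap B(y_j,V,n)$, contradicting disjointness. Hence the balls $B(x_i,U,n)$ are themselves pairwise disjoint, giving $N_X(U,n)\geq k$ and, dividing by $n$ and taking $\limsup$, $H_{U,X}\geq H_{V,Y}$.

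Finally, fix a decreasing neighborhood basis $V_1\supset V_2\supset\cdots$ of $\Delta_Y$ and let $U_n:=(\Phi\times\Phi)^{-1}(V_n)$. These are decreasing open neighborhoods of $\Delta_X$, though in general they do not form a basis for $\Delta_X$ because $\Phi$ need not be injective. This is the one point requiring a little care, but it is not a real obstacle: for any neighborhood basis $W_m$ of $\Delta_X$ and any fixed $n$, eventually $W_m\subseteq U_n$, and hence by the monotonicity $H_{W_m,X}\geq H_{U_n,X}$ noted in the section. Therefore
\[
h_{\mathrm{top}}(F,X)=\lim_{m\to\infty}H_{W_m,X}\;\geq\; H_{U_n,X}\;\geq\; H_{V_n,Y}
\]
for every $n$, and letting $n\to\infty$ yields $h_{\mathrm{top}}(F,X)\geq h_{\mathrm{top}}(G,Y)$, as required.
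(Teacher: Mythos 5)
Your proof is correct and follows essentially the same route as the paper: pull back a neighborhood of the diagonal in $Y\times Y$ via $\Phi\times\Phi$, lift the centers of a maximal disjoint family of $(V,n)$-balls using surjectivity, and use the semi-conjugacy to show the lifted balls stay disjoint, giving $H_{U,X}\geq H_{V,Y}$. Your extra care in the last step (that the pulled-back neighborhoods need not form a basis of $\Delta_X$, but monotonicity of $H_{\cdot,X}$ still yields $h_{\mathrm{top}}(F,X)\geq H_{U,X}$) is exactly the point the paper handles implicitly with "Therefore $h_{\mbox{top}}(F,X)\geq H_{U,Y}$ for all neighborhoods $U$."
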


\begin{proof}
Let $U$ be a neighborhood of the diagonal in $Y^2.$ Choose $n$ and pick a family of $N(U,n)$ disjoint
balls $B(x_j,U,n)$ in $Y.$ Let 
$$
V=\{(u,v)\in X^2;(\Phi(u),\Phi(v))\in U\}.
$$
Then $V$ is a neighborhood of
the diagonal in $X^2.$ Since $\Phi$ is surjective, we can find $y_j\in X$ so that $\Phi(y_j)=x_j.$
We will show that the balls $B(y_j,V,n)$ are pairwise disjoint. It suffices to prove that if $z\in B(y_j,V,n)$ then $\Phi(z)\in B(x_j,U,n).$
We have that for all $0\leq \ell \leq n-1$, $F^\ell(y_j,z)\in V.$ Hence $(\Phi(F^\ell(y_j)),\Phi(F^\ell (z)))\in U.$
Therefore, $(G^\ell(\Phi(y_j)),G^\ell(\Phi(z)))\in U.$ This implies that 
$$
\Phi(z)\in B(x_j,U,n).
$$
It follows that $H_{V,X}\geq H_{U,Y}.$ Therefore $h_{\mbox{top}}(F,X)\geq H_{U,Y}$
for all neighborhoods $U.$ From this the lemma follows.
\end{proof}

Recall that for a non-exceptional polynomial $P$ it was shown that the map $\Phi : \mathbb P^1 \rightarrow  \overline{S}$ is surjective. We conclude the following.

\begin{corollary}
If the non-exceptional polynomial $P$ has degree $d$ then the topological entropy of the map $Q$ is at most $\log(d)$.
\end{corollary}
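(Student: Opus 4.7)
The plan is to deduce the corollary immediately from Lemma 4.1 together with Theorem \ref{thm:continuity}. By Theorem \ref{thm:continuity}, for a non-exceptional polynomial $P$ the map $\Phi$ extends to a continuous surjection $\Phi : \mathbb P^1 \to \overline S$, and $Q$ extends continuously to $\overline S$ with $Q \circ \Phi = \Phi \circ \overline P$. Thus $(\overline S, Q)$ is a continuous factor of the compact dynamical system $(\mathbb P^1, \overline P)$ via the surjective semi-conjugacy $\Phi$.

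First I would invoke Lemma 4.1 with $F = \overline P$, $X = \mathbb P^1$, $G = Q$, and $Y = \overline S$ to obtain
\[
h_{\mbox{top}}(\overline P, \mathbb P^1) \;\geq\; h_{\mbox{top}}(Q, \overline S).
\]
Next I would quote the classical fact that the topological entropy of a degree $d$ rational map of the Riemann sphere equals $\log d$ (this is due to Lyubich, with the upper bound also following from Gromov's estimate); in our setting $\overline P$ has topological degree $d$, so $h_{\mbox{top}}(\overline P, \mathbb P^1) = \log d$.

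Combining these two inputs yields $h_{\mbox{top}}(Q, \overline S) \leq \log d$, which is exactly the statement of the corollary.

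I do not expect any genuine obstacle: the real content has already been established in Theorem \ref{thm:continuity} (continuity and surjectivity of the extended $\Phi$, which crucially uses non-exceptionality) and in Lemma 4.1 (monotonicity of topological entropy under surjective semi-conjugacy). The only subtlety worth remarking on is the need for compactness of the target, which is precisely why the non-exceptional hypothesis enters through Theorem \ref{thm:2.8} and Theorem \ref{thm:continuity}; without it $\overline S$ need not be compact and the definition of $h_{\mbox{top}}$ used in Section 4 would not apply directly.
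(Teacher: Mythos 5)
Your proposal is correct and is exactly the argument the paper intends: the corollary is stated immediately after Lemma 4.1 and the remark that $\Phi:\mathbb P^1\to\overline S$ is a surjective semi-conjugacy, so the bound $h_{\mbox{top}}(Q,\overline S)\leq h_{\mbox{top}}(\overline P,\mathbb P^1)=\log d$ follows just as you describe, using the classical fact that a degree $d$ polynomial has topological entropy $\log d$ on the sphere. Your remark about compactness of $\overline S$ being the point where non-exceptionality enters is also consistent with the paper's setup in Theorem \ref{thm:continuity}.
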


Recall from Lemma \ref{counting} that if $P$ is a non-exceptional polynomial, then for every $z \in \mathbb P^1$ there are at most $M = d^2 -1$ points $w$ with $\Phi(w) = \Phi(z)$.

We denote by $d(\cdot, \cdot)$ the spherical metric on $\mathbb P^1$. For $k \in \mathbb N$ we define
$$
U_k := \{(z,w) \mid d(z,w) < \frac{1}{k} \}.
$$
Note that the sets $U_k$ form a neighborhood basis for the diagonal in $\mathbb P^1 \times \mathbb P^1$.

Let $\delta = \delta(k)>0$ be sufficiently small such that if $z_0, w_0 \in \mathbb P^1$ are such that $d(z_0,w_0) < 2 \delta$, then $d(z_j, w_j) < \frac{1}{k}$ for $0 \le j \le k-1$. Let $\overline{x} \in \overline{S}$ and write $\Phi^{-1}(\overline{x}) = \{a_j\}_{j=1}^r$. By continuity of the map $\Phi$ and compactness of $\mathbb{P}^1$ there exists a neighborhood $\Omega(\overline{x})$ whose lift to $\mathbb P^1$ satisfies
$$
\Phi^{-1}(\Omega) \subset \bigcup_{j=1}^r D(a_j, \frac{\delta}{2}).
$$
Then let $\eta = \eta(\overline{x}) >0$ be such that
$$
\bigcup_{j=1}^r D(a_j, \eta) \subset \Phi^{-1}(\Omega),
$$
and define $\mathcal{N}(\overline{x})$ to be the maximal subset of $\bigcup_{j=1}^r D(a_j, \frac{\eta}{3})$ for which
$$
\Phi^{-1} \circ \Phi (\mathcal{N}(\overline{x})) = \mathcal{N}(\overline{x}).
$$
Note that $\mathcal{N}(\overline{x})$ is obtained from $\bigcup_{j=1}^r D(a_j, \frac{\eta}{3})$ by removing the relatively closed set of points whose mirrors do not all lie in $\bigcup_{j=1}^r D(z_j, \eta)$. We then define
$$
V_k := \bigcup_{\overline{x} \in S} \mathcal{N}(\overline{x}) \times \mathcal{N}(\overline{x}).
$$

\begin{lemma}
The sets $\Phi(V_k)$ form a neighborhood basis of the diagonal in $\overline{S}\times \overline{S}$, and $\Phi^{-1} \Phi(V_k) = V_k$ for each $k \in \mathbb N$.
\end{lemma}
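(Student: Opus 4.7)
The plan is to establish the two assertions one after the other: the saturation identity follows directly from the way each $\mathcal N(\overline x)$ is built, while the basis property will follow from a compactness argument around the closed relation set $E:=\{(z,w)\in\mathbb P^1\times\mathbb P^1:\Phi(z)=\Phi(w)\}$.

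For the saturation $(\Phi\times\Phi)^{-1}\circ(\Phi\times\Phi)(V_k)=V_k$: if $(z,w)$ lies on the left, then there is $(z',w')\in V_k$ with $\Phi(z)=\Phi(z')$ and $\Phi(w)=\Phi(w')$, and by construction $(z',w')\in \mathcal N(\overline x)\times\mathcal N(\overline x)$ for some $\overline x\in \overline S$. The defining property $\Phi^{-1}\Phi(\mathcal N(\overline x))=\mathcal N(\overline x)$ forces $z,w\in\mathcal N(\overline x)$ and hence $(z,w)\in V_k$; the reverse inclusion is trivial. Combining this saturation with the fact that $\Phi\times\Phi$ is a continuous surjection between compact Hausdorff spaces---hence a closed, hence quotient, map---shows that $(\Phi\times\Phi)(V_k)$ is open in $\overline S\times \overline S$. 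It contains the diagonal, because every fiber $\Phi^{-1}(\overline x)$ lies inside $\mathcal N(\overline x)$, so $(\overline x,\overline x)=(\Phi(a_j),\Phi(a_j))$ for any $a_j\in \Phi^{-1}(\overline x)$.

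For the basis property, let $W$ be any open neighborhood of the diagonal in $\overline S\times \overline S$. Then $E$ is closed and sits inside the open set $(\Phi\times\Phi)^{-1}(W)$, so by compactness of $\mathbb P^1\times \mathbb P^1$ there exists $\epsilon>0$ such that the spherical $\epsilon$-neighborhood of $E$ is contained in $(\Phi\times\Phi)^{-1}(W)$. Since the iterates of $P$ on $\mathbb P^1$ expand distances, the required condition on $\delta(k)$ forces $\delta(k)\to 0$, so for $k$ sufficiently large $\delta(k)<\epsilon$. Each $(z,w)\in V_k$ belongs to a product $\bigcup_j D(a_j,\eta/3)\times\bigcup_j D(a_j,\eta/3)$ for some $\overline x$ with $\eta\le\delta(k)/2$, and is therefore within spherical distance $\delta(k)/3$ of a pair in $\Phi^{-1}(\overline x)\times \Phi^{-1}(\overline x)\subset E$. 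Hence $V_k\subset (\Phi\times\Phi)^{-1}(W)$, and applying $\Phi\times\Phi$ yields $(\Phi\times\Phi)(V_k)\subset W$.

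The one step that requires care is the last one: each $\mathcal N(\overline x)$ is constructed separately with its own constants $\Omega(\overline x)$ and $\eta(\overline x)$, so a uniform bound is needed to conclude that all the product pieces $\mathcal N(\overline x)\times\mathcal N(\overline x)$ fit simultaneously inside a single $\epsilon$-tube around $E$. It is precisely the compactness of $\mathbb P^1\times \mathbb P^1$, together with the monotone decay $\delta(k)\to 0$, which supplies this uniformity.
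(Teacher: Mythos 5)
The paper states this lemma without proof, so there is no argument to compare against; your proposal supplies the missing verification and its overall structure (saturation of $V_k$, quotient-map openness, and an $\epsilon$-tube around the relation $E$) is sound. Two points deserve tightening. First, the quotient-map argument yields openness of $(\Phi\times\Phi)(V_k)$ only once you know that $V_k$ itself is open in $\mathbb P^1\times\mathbb P^1$; this is true because each $\mathcal N(\overline x)$ is an open union of disks minus a relatively closed set (as the paper remarks), but it needs to be said, since it is exactly the hypothesis the quotient property consumes. Second, the stated inequality $\eta(\overline x)\le\delta(k)/2$ does not follow from the construction: the inclusion $\bigcup_j D(a_j,\eta)\subset\Phi^{-1}(\Omega)\subset\bigcup_j D(a_j,\delta/2)$ only forces $\eta$ to be at most on the order of $M\delta$ when several fiber points cluster, since the disk $D(a_1,\eta)$ may be covered by a chain of the smaller disks. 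Fortunately you do not need that inequality: the chain $\mathcal N(\overline x)\subset\bigcup_j D(a_j,\eta/3)\subset\Phi^{-1}(\Omega(\overline x))\subset\bigcup_j D(a_j,\delta(k)/2)$ already places every point of $\mathcal N(\overline x)$ within $\delta(k)/2$ of the fiber $\Phi^{-1}(\overline x)$, uniformly in $\overline x$. This is precisely the uniformity your closing paragraph worries about, and it comes from the construction of $\Omega(\overline x)$ rather than from compactness of $\mathbb P^1\times\mathbb P^1$, which you correctly use only to extract the $\epsilon$-tube around the closed set $E$. With these two repairs the proof is complete.
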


Let $z \in \mathbb P^1$ and consider all points $\overline{x} \in \overline{S}$ for which $z \in \mathcal{N}(\overline{x})$. Let $\mu$ be the supremum over all $\eta(\overline{x})$. Then there certainly exists an element in $S$ with $z \in \mathcal{N}(\overline{x})$ for which $\eta(\overline{x}) \ge \frac{2}{3} \mu$. From now on let $\overline{x}$ be such an element.

Let $\overline{y}$ be any other element of $\overline{S}$ for which $z \in \mathcal{N}(\overline{y})$. Let us write $\Phi^{-1}(\overline{x}) = \{a_j\}_{j=1}^r$ and $\Phi^{-1}(\overline{y}) = \{b_j\}_{j=1}^s$.

\begin{lemma}
$$
\mathcal{N}(\overline{y}) \subset \bigcup_{j=1}^r D(a_j, \delta).
$$
\end{lemma}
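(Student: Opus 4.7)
The plan is to prove the inclusion in three reductions: first, derive a quantitative comparison between $\eta(\overline{x})$ and $\eta(\overline{y})$; second, show $\overline{y}\in\Omega(\overline{x})$; and third, apply the triangle inequality to a general $w\in\mathcal{N}(\overline{y})$. The crucial input is the near--maximality $\eta(\overline{x})\geq\tfrac{2}{3}\mu$ built into the choice of $\overline{x}$ just before the lemma.

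Combining this with the trivial $\eta(\overline{y})\leq\mu$ immediately gives $\eta(\overline{y})\leq\tfrac{3}{2}\eta(\overline{x})$. Since $z\in\mathcal{N}(\overline{x})\cap\mathcal{N}(\overline{y})$, there exist indices $k,i$ with $d(z,a_k)<\eta(\overline{x})/3$ and $d(z,b_i)<\eta(\overline{y})/3$. The triangle inequality, together with the bound above, then yields
\[
d(a_k,b_i)\,<\,\tfrac{1}{3}\eta(\overline{x})+\tfrac{1}{3}\eta(\overline{y})\,\leq\,\tfrac{5}{6}\,\eta(\overline{x})\,<\,\eta(\overline{x}),
\]
so $b_i\in D(a_k,\eta(\overline{x}))\subset\Phi^{-1}(\Omega(\overline{x}))$ and hence $\overline{y}=\Phi(b_i)\in\Omega(\overline{x})$. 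It follows that the entire fiber $\{b_1,\ldots,b_s\}=\Phi^{-1}(\overline{y})$ sits in $\Phi^{-1}(\Omega(\overline{x}))\subset\bigcup_k D(a_k,\delta/2)$, so each $b_j$ lies within $\delta/2$ of some $a_{k(j)}$. For an arbitrary $w\in\mathcal{N}(\overline{y})$ I would pick $j$ with $w\in D(b_j,\eta(\overline{y})/3)$ and conclude
\[
d(w,a_{k(j)})\,<\,\tfrac{\delta}{2}+\tfrac{\eta(\overline{y})}{3}\,\leq\,\tfrac{\delta}{2}+\tfrac{\eta(\overline{x})}{2}\,\leq\,\delta,
\]
provided $\Omega(\overline{x})$ was chosen small enough to enforce $\eta(\overline{x})\leq\delta/2$.

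The main obstacle I anticipate is the bookkeeping among the four scales $\mu$, $\eta(\overline{x})$, $\eta(\overline{y})$ and $\delta$: in particular the tacit tightening $\eta(\overline{x})\leq\delta/2$ must be enforced, which is compatible with the definitions but is not written into them, and can be arranged by shrinking $\Omega(\overline{x})$ if necessary. The one genuinely nontrivial step is the intermediate claim $\overline{y}\in\Omega(\overline{x})$, which rests essentially on the near--maximality of $\eta(\overline{x})$; after that is in hand, the lemma reduces to two uses of the triangle inequality.
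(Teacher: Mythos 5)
Your argument is correct and is essentially the paper's proof: the same use of the near-maximality $\eta(\overline{x})\ge\tfrac{2}{3}\mu$ together with $\eta(\overline{y})\le\mu$ to get $d(a_1,b_1)<\eta(\overline{x})$, hence $\Phi^{-1}(\overline{y})\subset\Phi^{-1}(\Omega(\overline{x}))\subset\bigcup_j D(a_j,\delta/2)$, followed by the triangle inequality. The only (minor) difference is in the last step: your extra hypothesis $\eta(\overline{x})\le\delta/2$ is not actually needed, since the construction already gives $\mathcal{N}(\overline{y})\subset\bigcup_j D(b_j,\eta(\overline{y})/3)\subset\Phi^{-1}(\Omega(\overline{y}))\subset\bigcup_j D(b_j,\delta/2)$, and combining this $\delta/2$ with the $\delta/2$ from $b_j\in\Phi^{-1}(\Omega(\overline{x}))$ yields the radius $\delta$ directly.
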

\begin{proof}
Since $z\in \mathcal{N}(\overline{y})$ we may assume that $d(z, b_1) < \frac{\eta(\overline{y})}{3}$. Similarly we have $d(z, a_1) < \frac{\eta(\overline{x})}{3}$. Since $\eta(\overline{y}) \le \mu$ and $\eta(\overline{x}) \ge \frac{2}{3} \mu$ it follows that
$$
d(a_1, b_1) < \frac{\eta(\overline{x})}{3} + \frac{\eta(\overline{y})}{3} \le (\frac{1}{3} + \frac{1}{2}) \eta(\overline{x}) < \eta(\overline{x}).
$$
Hence $b_1 \in \Phi^{-1}(\Omega(\overline{x}))$, and therefore $\Phi^{-1}(\overline{y}) \subset \Phi^{-1}(\Omega(\overline{x}))$. Since
$$
\begin{aligned}
\Phi^{-1}(\Omega(\overline{x})) & \subset \bigcup D(a_j, \frac{\delta}{2}), \; \; \mathrm{and}\\
\mathcal{N}(\overline{y}) & \subset \bigcup D(b_j, \frac{\delta}{2}),
\end{aligned}
$$
it follows that
$$
\mathcal{N}(\overline{y}) \subset \bigcup D(a_j, \delta).
$$
\end{proof}

Let $z \in \mathbb P^1$ and define
$$
W_z := \{w \in \mathbb P^1 \mid (z,w) \in V_K\}.
$$
Recall from Lemma \ref{counting} that for each $\overline{x} \in \overline{S}$ the set $\Phi^{-1}(\overline{x})$ contains at most $M = d^2-1$ elements.

\begin{corollary}\label{cor:disks}
The set $W_z$ is contained in at most $M$ disks of radius $\delta$. Moreover, there exists an $\overline{x} \in \overline{S}$ so that the centers of these disks can be chosen to lie in the set $\Phi^{-1}(\overline{x})$.
\end{corollary}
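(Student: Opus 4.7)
The plan is to unpack the definition of $W_z$ and then read off the conclusion from the two preceding lemmas. By the definition of $V_k$, a point $w \in \mathbb P^1$ lies in $W_z$ if and only if there exists some $\overline{y} \in \overline{S}$ with both $z$ and $w$ in $\mathcal{N}(\overline{y})$, and in that case automatically $w \in \mathcal{N}(\overline{y})$. Writing $\mathcal Y := \{\overline{y} \in \overline{S} : z \in \mathcal{N}(\overline{y})\}$, this rephrases as
$$
W_z = \bigcup_{\overline{y} \in \mathcal Y} \mathcal{N}(\overline{y}).
$$
If $\mathcal Y = \emptyset$ then $W_z = \emptyset$ and there is nothing to prove, so I assume $\mathcal Y$ is non-empty.

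The next step is to single out one privileged $\overline{x} \in \mathcal Y$, using exactly the selection scheme introduced just before the preceding lemma: set $\mu := \sup_{\overline{y} \in \mathcal Y} \eta(\overline{y})$ and pick $\overline{x} \in \mathcal Y$ with $\eta(\overline{x}) \ge \tfrac{2}{3}\mu$. Writing $\Phi^{-1}(\overline{x}) = \{a_1, \dots, a_r\}$, Lemma \ref{counting} forces $r \le d^2 - 1 = M$, so this single $\overline{x}$ already supplies the correct number of prospective disk centers, and they lie where the second claim of the corollary requires.

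To finish, I would invoke the preceding lemma for every $\overline{y} \in \mathcal Y$ (the case $\overline{y} = \overline{x}$ is trivial from the definition of $\mathcal{N}(\overline{x})$), which yields
$$
\mathcal{N}(\overline{y}) \subset \bigcup_{j=1}^{r} D(a_j, \delta) \qquad \text{for every } \overline{y} \in \mathcal Y.
$$
Taking the union over $\mathcal Y$ gives $W_z \subset \bigcup_{j=1}^{r} D(a_j, \delta)$ with centers in $\Phi^{-1}(\overline{x})$, which is exactly the claim.

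I do not anticipate a genuine obstacle here: once one recognises that the corollary is simply a uniform version of the preceding lemma (``the same $\overline{x}$ serves for \emph{all} $\overline{y} \in \mathcal Y$''), the proof is essentially formal. The only point deserving care is the near-maximality condition $\eta(\overline{x}) \ge \tfrac{2}{3}\mu$: this is precisely the hypothesis under which the preceding lemma promotes the small disks $D(a_j, \tfrac{\eta}{3})$ defining $\mathcal{N}(\overline{x})$ to the slightly enlarged disks $D(a_j, \delta)$ that must absorb $\mathcal{N}(\overline{y})$ for each competitor $\overline{y}$. Without that $\tfrac{2}{3}\mu$ cushion, a sequence of $\overline{y}$'s with $\eta(\overline{y})$ approaching $\mu$ from above could conceivably escape any fixed family of disks centered at $\Phi^{-1}(\overline{x})$, so this is the one quantitative ingredient that must not be dropped.
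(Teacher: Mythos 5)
Your argument is correct and is exactly the one the paper intends: the corollary is stated without proof precisely because it is the union over all $\overline{y}$ with $z \in \mathcal{N}(\overline{y})$ of the preceding lemma's inclusion $\mathcal{N}(\overline{y}) \subset \bigcup_{j=1}^{r} D(a_j,\delta)$, combined with the bound $r \le M$ from Lemma \ref{counting}, using the near-maximal $\overline{x}$ chosen just as in the paragraph before the lemma. Your identification $W_z = \bigcup_{\overline{y}\in\mathcal Y}\mathcal{N}(\overline{y})$ and your remark on why the $\tfrac{2}{3}\mu$ cushion is essential match the paper's setup exactly.
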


In order to estimate the topological entropy of $Q$ from below we will work with maximally separated sets with respect to the neighborhood bases $\{U_k\}$ and $\{(\Phi\times \Phi)(V_k)\}$. Notice that a collection of points $X \subset \mathbb P^1$ is $(n,V_k)$-separated if and only if $\Phi(X) \subset S$ is $(n, (\Phi\times \Phi)(V_k))$-separated. This is useful as it allows us to only work in $\mathbb P^1$.

\begin{theorem}
Suppose that $P$ is a non-exceptional polynomial. Then the topological entropy of $Q$ is $\mathrm{log}(d)$.
\end{theorem}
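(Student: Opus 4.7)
The upper bound $h_{\mathrm{top}}(Q) \leq \log d$ was obtained in Corollary 4.2, so my task is the lower bound $h_{\mathrm{top}}(Q) \geq \log d$. The plan is to transfer the classical identity $h_{\mathrm{top}}(P, \mathbb P^1) = \log d$ across the semiconjugacy $\Phi$ by comparing $(n, U_{k'})$-separated sets for $P$ with $(n, V_k)$-separated sets. By the observation stated just before the theorem, it suffices to produce, for each $k$, an $(n, V_k)$-separated subset of $\mathbb P^1$ whose cardinality grows essentially like $d^n$; its image under $\Phi$ is then automatically $(n, (\Phi\times\Phi)(V_k))$-separated in $\overline S$ and delivers the required lower bound for $h_{\mathrm{top}}(Q)$.

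Fix $\varepsilon > 0$. Since $h_{\mathrm{top}}(P,\mathbb P^1)=\log d$, I would choose $k'$ large enough that $H_{U_{k'}}(P) \geq \log d - \varepsilon$, and then choose $k$ so that the scale $\delta = \delta(k)$ appearing in the construction of $V_k$ satisfies $2\delta < 1/k'$. Let $Y_n \subset \mathbb P^1$ be a maximal $(n, U_{k'})$-separated set for $P$, so that $\limsup_n \tfrac{1}{n}\log|Y_n| \geq \log d - \varepsilon$. The crucial claim is the bound
$$
\bigl|B(z, V_k, n) \cap Y_n\bigr| \leq M \quad \text{for every } z \in \mathbb P^1.
$$
Granting this, form a graph on $Y_n$ by joining $z$ to $z'$ exactly when the pair fails to be $(n, V_k)$-separated. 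The claim bounds the maximum degree by $M-1$, so a greedy independent set yields $X_n \subset Y_n$ that is $(n, V_k)$-separated with $|X_n| \geq |Y_n|/M$. Moreover $\Phi\bigl|_{X_n}$ is injective: if $\Phi(z) = \Phi(z')$ for two points of $X_n$, then for every $j$ both $P^j(z)$ and $P^j(z')$ lie in the fibre $\Phi^{-1}(\Phi(P^j(z))) \subset \mathcal{N}(\Phi(P^j(z)))$, hence $(P^j(z), P^j(z')) \in V_k$, contradicting $(n, V_k)$-separation. Consequently $\Phi(X_n)$ is an $(n, (\Phi\times\Phi)(V_k))$-separated subset of $\overline S$ of size at least $|Y_n|/M$, and taking $\limsup_n \tfrac{1}{n}\log$ gives $H_{V_k}(Q) \geq \log d - \varepsilon$; letting $\varepsilon \to 0$ concludes.

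The heart of the argument is the mirror-tracking claim. If $z' \in B(z, V_k, n)$, then Corollary~\ref{cor:disks} places $P^j(z')$ within distance $\delta$ of one of at most $M$ points of a fibre $\Phi^{-1}(\overline x_j)$ for some $\overline x_j \in \overline S$ close to $\Phi(P^j(z))$. The identity $\Phi \circ P = Q \circ \Phi$ together with continuity of $P$ propagates the choice of preimage forward: if $P^j(z')$ is close to $a_j \in \Phi^{-1}(\overline x_j)$, then $P^{j+1}(z')$ is close to $P(a_j) \in \Phi^{-1}(Q(\overline x_j))$, and provided $\delta$ is small relative to the spacing between distinct elements of the relevant fibres, this point $P(a_j)$ coincides with one of the $M$ centers supplied by Corollary~\ref{cor:disks} at time $j+1$. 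Thus $B(z, V_k, n)$ is covered by at most $M$ Bowen-type tubes, one for each initial choice $a_0 \in \Phi^{-1}(\overline x_0)$, each of radius at most $\delta$. The condition $2\delta < 1/k'$ then forces each such tube to contain at most one point of $Y_n$, which yields the claim.

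The main obstacle is precisely this propagation-of-branches step: an a priori count permits up to $M^n$ branches (one mirror choice at each time), and reducing this to the $M$ geometrically expected branches requires careful use of the dynamical rigidity coming from $\Phi \circ P = Q \circ \Phi$, together with controlled handling of the exceptional loci (ramification of $\Phi$ and mirror-breaking) analyzed in Section 3, where distinct fibre elements collide or where mirrors fuse into genuine preimage identifications.
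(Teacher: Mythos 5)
Your overall framework is the same as the paper's: transfer separated sets across $\Phi$ using the fact that $(n,V_k)$-separation upstairs is equivalent to $(n,(\Phi\times\Phi)(V_k))$-separation downstairs, and use Corollary \ref{cor:disks} to bound how many points of a $U$-separated set a single $V_k$-Bowen ball can swallow. The gap is in your central claim $\bigl|B(z,V_k,n)\cap Y_n\bigr|\le M$, which you correctly identify as the heart of the matter but do not prove. The propagation step you sketch does not work as stated: the fibres $\Phi^{-1}(\overline{x})$ do not have a uniform lower bound on the spacing between their distinct elements (near mirror-breaking points two elements of a fibre collide, and nearby fibres have elements arbitrarily close together), and $P$ expands near the Julia set, so ``$P^{j}(z')$ within $\delta$ of $a_j$'' does not yield ``$P^{j+1}(z')$ within $\delta$ of $P(a_j)$.'' A point can therefore hop between mirror branches along the orbit, and an honest count of branches over $n$ iterates is exponential in the number of hops, not bounded by $M$. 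You cannot choose $\delta$ ``small relative to the spacing'' uniformly, precisely because of the exceptional loci you mention in your last paragraph.

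The fix — and this is what the paper does — is to prove only the weaker bound $\bigl|B(z,V_k,n)\cap Y_n\bigr|\le M^{\frac{n}{k}+1}$, which is all your greedy independent-set argument needs: it still yields $H_{V_k}(Q)\ge \log d-\varepsilon-\tfrac{1}{k}\log M$, and the loss $\tfrac{1}{k}\log M$ vanishes as $k\to\infty$. The weaker bound follows from a word-coding argument: by the defining property of $\delta(k)$, once $w^i_j$ and $z_j$ (or two competitors $w^i_j$, $w^{i'}_j$) fall into the same $\delta$-disk they remain $U_k$-close for the next $k$ iterates, so the disk label of a point can only need to be reassigned at most once every $k$ steps; each point is thus coded by a word of length at most $\tfrac{n}{k}+1$ in an alphabet of size $M$, and $(n,U_k)$-separation forces distinct points to receive distinct words. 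Replace your claim by this estimate and your proof closes; as written, the argument for the bound $M$ is missing and is unlikely to be repairable without confronting exactly the branch-switching you defer.
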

\begin{proof}
Let $k \in \mathbb N$, and define $U_k, V_k \subset \mathbb P^1 \times \mathbb P^1$ as above. Let $n \in \mathbb N$ and suppose that $X$ is a collection of points in $\mathbb{P}^1$ that are $(n, U_k)$-separated. We would like to estimate from below the minimal number of points in $X$ whose images under $\Phi$ are $(n, (\Phi\times \Phi)(V_k))$-separated, or equivalently, the minimal number of points in $X$ that are $(n, V_k)$-separated. Let $z \in X$, and let $Y \subset X$ contain the points $w^i$ that are not $(n, V_k)$-separated from $z$. We will estimate the size of the set $Y$ from above.

By Corollary \ref{cor:disks} the only possible way for $(z_j, w^i_j)$ to lie in $V_k$ but not in $U_k$ is for the pair to lie in two distinct disks of radius $\delta = \delta(k)$ that contain the set $W_z$. Moreover, the centers of these disks must then be at least $\frac{1}{k} - 2\delta$ apart. If $z_j$ and $w^i_j$ instead lie in the same disk of radius $\delta$, then by our assumption on $\delta$ it follows that $(z_l, w^i_l)$ will lie in $U_k$ for $l = j, \ldots , j+k-1$.

It follows that we can represent the points in $Y$ by unique words in the letters $1$ through $M$. Let us be more precise. At time $0$ our set $Y$ is covered by at most $M$ disks. We assign to these disks the letters $1$ through (at most) $M$. The first letter we assign to each element $w^i \in Y$ is naturally the letter of a disk that contains the point $w^i_0 = w^i$.

Then consider the first time for which two points in $Y$ with the same initial letter have drifted at least $\frac{1}{k}$ apart, say after $j$ iterates. At that time we again assign the (at most) $M$ disks a letter, and give each element $w^i$ its second letter, namely the letter corresponding to a disk that contains $w^i_j$. Later letters are assigned similarly. By our assumption on $\delta(k)$ each word has at most $\frac{n}{k}+1$ letters, and by our assumption that the points in $Y$ are $(n, U_k)$-separated it follows that the words corresponding to these points are all unique. We can therefore estimate the number of elements in $Y$ by
$$
|Y| \le M^{\frac{n}{k}+1}.
$$
We denote by $N(n,U_k)$ and $N(n,V_k)$ the maximal number of respectively $(n, U_k)$- and $(n, V_k)$-separated points in $\mathbb P^1$. It follows that if $X$ contains $N(n,U_k)$ points, then at least
$$
N(n,U_k)/ M^{\frac{n}{k}+1}
$$
of those points are $(n,V_k)$-separated. In other words,
$$
N(n,U_k) \le M^{\frac{n}{k}+1} \cdot N(n,V_k).
$$
It follows that
$$
\frac{1}{n} \log N(n,U_k) \le (\frac{1}{k}+\frac{1}{n}) \log M + \frac{1}{n} \log N(n,V_k).
$$
Hence the topological entropy of $Q$ is at least $\log(d) - \frac{1}{k} \log M$, which holds for all $k \in \mathbb N$. Hence $h_{top}(Q) \ge \log(d)$. As we already estimated the entropy from above the proof is complete.
\end{proof}


\section{Metric Entropy}

We recall the notation from the previous section. Let $X$ be a metrizable compact topological space, and let $F:X \rightarrow X$ be a finite continuous map.
If $U$ is an open neighborhood of the diagonal $\Delta:=\{(x,x);x\in X\}$ we define $(U,n)$ balls
$B(x,U,n)$, centered at $x\in X$ by
$$
B(x,U,n)=\{y\in X; \{(x,y),(F(x),F(y)),\dots (F^{n-1}(x),F^{n-1}(y))\} \subset U\}.
$$

Let $\lambda$ be a probability measure on $X.$
$$
h_{\lambda}(F,x,U)=  \liminf_n -\frac{1}{n} \log (\lambda(B(x,U,n)))
$$
We notice that $h_{\lambda}(F,x,U)$ increases when we replace $U$ by a smaller set.
$$
h_{\lambda}(F,x)= \sup_{U} h_{\lambda}(F,x,U)
$$
If $\lambda$ is backwards invariant, then $h_\lambda(F,x)\geq h_\lambda(F,F(x))$. If $\lambda$ is also ergodic, then this function is constant $d\lambda$-almost everywhere. The metric entropy $h_{\lambda}(F)$ is defined to be this constant.

Let $\rho$ be any metric on $X$ defining the topology of $X.$ We can then use the neighborhoods
of the diagonal $U_\epsilon:=\{(x,y); \rho(x,y)<\epsilon\}$. We call the balls $B(x,\epsilon,n).$ 
Also we set $h_{\lambda}(F,x,\epsilon)=h_{\lambda}(F,x,U_\epsilon)$ and
then 
$$
h_{\lambda}(F,x)= \sup_{\epsilon} h_{\lambda}(F,x,\epsilon).
$$
Clearly, the metric entropy is independent of the metric. In fact it is a toplogical invariant.

We assume next that $P$ is a non-exceptional polynomial on $\mathbb C.$
Let $\lambda$ be a probability measure on $\mathbb P^1$, and let $\nu=\Phi_* \lambda$
be the push forward to $\overline{S}$. 

\begin{lemma}
If $\lambda$ is invariant on $\mathbb P^1$, then the push-forward $\nu$ is invariant on $\overline{S}.$ If $\lambda$ also is ergodic, then the push-forward, $\nu,$ is ergodic as well.
\end{lemma}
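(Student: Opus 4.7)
The plan is to exploit the semi-conjugacy $Q\circ\Phi=\Phi\circ P$ (extended to $\overline{S}$ using Theorem \ref{thm:continuity}) together with the identity $\nu(B)=\lambda(\Phi^{-1}(B))$ to reduce both assertions to the corresponding properties of $\lambda$ under $P$.

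For invariance, I would compute directly: for any Borel set $B\subset\overline{S}$,
$$
Q_*\nu(B)=\nu(Q^{-1}(B))=\lambda\bigl(\Phi^{-1}(Q^{-1}(B))\bigr)=\lambda\bigl((Q\circ\Phi)^{-1}(B)\bigr).
$$
Using the semi-conjugacy this equals $\lambda((\Phi\circ P)^{-1}(B))=\lambda(P^{-1}(\Phi^{-1}(B)))$, and the $P$-invariance of $\lambda$ gives $\lambda(\Phi^{-1}(B))=\nu(B)$. Hence $Q_*\nu=\nu$.

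For ergodicity, suppose $E\subset\overline{S}$ is Borel with $Q^{-1}(E)=E$ modulo a $\nu$-null set, and set $F:=\Phi^{-1}(E)$. From the semi-conjugacy,
$$
P^{-1}(F)=P^{-1}(\Phi^{-1}(E))=(\Phi\circ P)^{-1}(E)=(Q\circ\Phi)^{-1}(E)=\Phi^{-1}(Q^{-1}(E)).
$$
Therefore $P^{-1}(F)\triangle F=\Phi^{-1}(Q^{-1}(E)\triangle E)$, and this set has $\lambda$-measure equal to $\nu(Q^{-1}(E)\triangle E)=0$. Thus $F$ is $P$-invariant modulo $\lambda$, so ergodicity of $\lambda$ forces $\lambda(F)\in\{0,1\}$, whence $\nu(E)=\lambda(F)\in\{0,1\}$.

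The only subtlety is the mod-zero bookkeeping when transferring invariance through $\Phi$, but this is immediate from the identity $\lambda(\Phi^{-1}(\cdot))=\nu(\cdot)$ on Borel sets (in particular $\Phi$ sends $\lambda$-null preimages to $\nu$-null sets and vice versa). I do not anticipate any genuine obstacle: the lemma is a formal consequence of the push-forward construction combined with the continuous semi-conjugacy established earlier.
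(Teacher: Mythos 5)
Your proof is correct and follows essentially the same route as the paper: the invariance computation is the paper's chain of equalities (read in the opposite direction), and the ergodicity argument — pulling a $Q$-invariant-mod-null set $E$ back to $F=\Phi^{-1}(E)$ and using $P^{-1}(\Phi^{-1}(E))=\Phi^{-1}(Q^{-1}(E))$ to transfer invariance — is exactly what the paper does. Your explicit remark that $P^{-1}(F)\triangle F=\Phi^{-1}(Q^{-1}(E)\triangle E)$ makes the mod-zero bookkeeping slightly more transparent than the paper's wording, but it is the same argument.
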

\begin{proof}
We prove first 
that invarance of $\nu$. Let $E$ be a Borel set in $\overline{S}$.
$$
\begin{aligned}
\nu(E) & =  \lambda(\Phi^{-1}(E))\\
& = \lambda(P^{-1}(\Phi^{-1}(E))\\
& = \lambda((\Phi\circ P)^{-1}(E))\\
& =  \lambda((Q\circ \Phi)^{-1}(E))\\
& =  \lambda((\Phi^{-1} (Q^{-1}(E))\\
& =  \nu(Q^{-1}(E))
\end{aligned}
$$

Next we prove ergodicity. Let $E\subset \overline{S}$ be a Borel set. Assume that
the set $Q^{-1}(E)$ is the same as $E$  except for a set of $\nu$-measure $0.$
So the sets $Q^{-1}(E)\setminus E$ and $E\setminus Q^{-1}(E)$ have $\nu$-measure $0.$
The pull backs of $E$, $Q^{-1}(E)$ and the two difference sets to $\mathbb P^1$ have the same
$\lambda$-measures. Consider the sets $F=\Phi^{-1}(E)$ and $P^{-1}(F).$
Since $P^{-1}(\Phi^{-1}(E))=\Phi^{-1}(Q^{-1}(E))$ we see that $F$ is invariant modulo sets of measure $0$ for $P.$ Hence $F$ has measure $0$ or $1$. Hence the $\nu$-measure of $E$ is also $0$ or $1.$
\end{proof}

\begin{lemma}
$h_\lambda(P,z) \geq h_\nu(Q,\Phi(z))$.
\end{lemma}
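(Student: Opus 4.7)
The plan is to exploit the semi-conjugacy $Q \circ \Phi = \Phi \circ P$ together with the pushforward relation $\nu = \Phi_* \lambda$ to reduce the metric-entropy inequality to an inclusion of Bowen balls. The argument parallels the topological-entropy argument of Lemma 4.1, but is simpler because we do not need to worry about choosing preimages.

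Fix a neighborhood $U'$ of the diagonal in $\overline{S}\times \overline{S}$. Since $\Phi:\mathbb P^1 \to \overline{S}$ is continuous, the map $\Phi\times\Phi$ is continuous, and so
$$
U := (\Phi\times\Phi)^{-1}(U')
$$
is an open neighborhood of the diagonal in $\mathbb P^1 \times \mathbb P^1$ (it contains the diagonal since $(\Phi(x),\Phi(x))\in U'$ for every $x$). This is the neighborhood I would use on the $\mathbb P^1$ side to compare against $U'$ on the $\overline{S}$ side.

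Next I would verify the key inclusion
$$
B(z,U,n)\subset \Phi^{-1}\bigl(B(\Phi(z),U',n)\bigr).
$$
Indeed, if $y\in B(z,U,n)$, then $(P^j(z),P^j(y))\in U$ for $j=0,\dots,n-1$, which by the definition of $U$ means $(\Phi(P^j(z)),\Phi(P^j(y)))\in U'$. Using $\Phi\circ P = Q\circ \Phi$ iteratively, this is the same as $(Q^j(\Phi(z)),Q^j(\Phi(y)))\in U'$ for every such $j$, so $\Phi(y)\in B(\Phi(z),U',n)$, as required.

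Applying the measure $\lambda$ and using $\nu=\Phi_*\lambda$, the inclusion gives
$$
\lambda(B(z,U,n))\le \lambda\bigl(\Phi^{-1}(B(\Phi(z),U',n))\bigr)=\nu(B(\Phi(z),U',n)).
$$
Taking $-\tfrac{1}{n}\log$ flips the inequality, and taking $\liminf_{n\to\infty}$ yields
$$
h_\lambda(P,z,U)\ge h_\nu(Q,\Phi(z),U').
$$
Finally, since $U$ is a specific neighborhood of the diagonal in $\mathbb P^1\times\mathbb P^1$, we have $h_\lambda(P,z)=\sup_{U''} h_\lambda(P,z,U'')\ge h_\lambda(P,z,U)\ge h_\nu(Q,\Phi(z),U')$, and taking the supremum over $U'$ on the right gives $h_\lambda(P,z)\ge h_\nu(Q,\Phi(z))$, as desired.

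There is no serious obstacle here: the statement is essentially the metric-entropy analogue of the elementary fact that entropy does not increase under continuous factors, and the semi-conjugacy together with the pushforward definition of $\nu$ makes the Bowen-ball comparison immediate. The only minor point to keep in mind is that the neighborhood basis used in defining $h_\lambda$ and $h_\nu$ consists of arbitrary open neighborhoods of the diagonal (not just metric $\epsilon$-neighborhoods), which is exactly what makes the pullback $U=(\Phi\times\Phi)^{-1}(U')$ an admissible neighborhood on the $\mathbb P^1$ side.
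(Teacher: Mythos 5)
Your proposal is correct and follows essentially the same route as the paper: you pull back the diagonal neighborhood via $\Phi\times\Phi$, use the semi-conjugacy to show the Bowen ball in $\mathbb P^1$ maps into the corresponding Bowen ball in $\overline{S}$, and then apply the pushforward relation $\nu=\Phi_*\lambda$ before passing to the $\liminf$ and the suprema. The only cosmetic difference is that you phrase the key inclusion as $B(z,U,n)\subset \Phi^{-1}(B(\Phi(z),U',n))$ rather than $\Phi(B(z,U,n))\subset B(\Phi(z),U',n)$, which are equivalent here.
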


\begin{proof}
Choose a neighborhood $V$ of the diagonal in $\overline{S}\times \overline{S}.$ Then
$$
U:=\{(z,w)\in \mathbb P^1 \times \mathbb P^1; (\Phi(z),\Phi(w))\in V\},
$$
is a neighborhood of the diagonal in $\mathbb P^1 \times \mathbb P^1.$ If $w \in B(x,U,n),$ then
$$
\{(z,w),\cdots,(P^{n-1}(z),P^{n-1}(w))\}\subset U.
$$
Hence
$$
\{(\Phi(z),\Phi(w)),\cdots,(\Phi(P^{n-1}(z)),\Phi(P^{n-1}(w)))\}\subset V.
$$
This implies that
$$
\{(\Phi(z),\Phi(w)),\cdots,(Q^{n-1}(\Phi(z)),Q^{n-1}(\Phi(w)))\}\subset V.
$$
Therefore $\Phi(w) \in B(\Phi(z),V,n)$, and so we get:
$$
\begin{aligned}
\Phi(B(z,U,n)) & \subset B(\Phi(z),V,n)\\
& \Rightarrow & \\
\lambda(B(z,U,n)) & \leq \nu(B(\Phi(z),V,n))\\
& \Rightarrow & \\
h_\lambda (P,z,U) & \geq h_\nu (Q,\Phi(z),V)\\
& \Rightarrow & \\
h_\lambda (P,z) & \geq h_\nu (Q,\Phi(z),V)\\
& \Rightarrow & \\
h_\lambda (P,z) & \geq h_\nu (Q,\Phi(z)).
\end{aligned}
$$
\end{proof}

\begin{lemma} Assume that $\nu$ is invariant and ergodic. Then for $d\lambda$-almost every $z$, we have 
$h_\lambda(P,z) \geq h_\nu(Q)$
\end{lemma}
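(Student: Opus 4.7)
The plan is to combine the pointwise inequality from the previous lemma with the ergodicity of $\nu$, passed back to $\mathbb{P}^1$ via the push-forward identity $\nu = \Phi_* \lambda$.

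First I would invoke the previous lemma, which establishes the pointwise inequality $h_\lambda(P,z) \geq h_\nu(Q,\Phi(z))$ for every $z \in \mathbb{P}^1$. Therefore it suffices to show that $h_\nu(Q, \Phi(z)) = h_\nu(Q)$ for $\lambda$-almost every $z$.

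Next, by the discussion in the section preceding the lemma, whenever $\nu$ is invariant and ergodic the function $y \mapsto h_\nu(Q,y)$ is constant $\nu$-almost everywhere, and this constant value is by definition $h_\nu(Q)$. Hence there is a Borel set $E \subset \overline{S}$ with $\nu(E) = 1$ such that $h_\nu(Q,y) = h_\nu(Q)$ for all $y \in E$. Setting $F := \Phi^{-1}(E)$, the identity $\nu = \Phi_* \lambda$ gives
$$
\lambda(F) = \lambda(\Phi^{-1}(E)) = \nu(E) = 1.
$$

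Finally, for every $z \in F$ we have $\Phi(z) \in E$, so $h_\nu(Q, \Phi(z)) = h_\nu(Q)$, and combining with the pointwise inequality of the previous lemma we conclude $h_\lambda(P,z) \geq h_\nu(Q)$ for every $z \in F$, which is a set of full $\lambda$-measure. There is essentially no obstacle here: the content of the lemma is simply the observation that the $\nu$-full-measure set on which the local entropy function is constant pulls back to a $\lambda$-full-measure set under $\Phi$.
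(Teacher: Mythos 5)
Your proof is correct and is essentially the paper's own argument, just written out in more detail: both use the fact that $h_\nu(Q,\cdot)=h_\nu(Q)$ on a set of full $\nu$-measure, pull that set back under $\Phi$ using $\nu=\Phi_*\lambda$ to get a set of full $\lambda$-measure, and combine with the pointwise inequality $h_\lambda(P,z)\geq h_\nu(Q,\Phi(z))$ from the preceding lemma. No issues.
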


\begin{proof}
This follows since $h_\nu(Q,y)=h_\nu(Q)$ almost everywhere $d\nu.$
Hence the inequality also holds $d\lambda$-almost everywhere, as $\nu$ is the
pushforward of $\lambda.$
\end{proof}

\begin{corollary} Assume that $\lambda,\nu$ are invariant and ergodic. Then 
$h_\lambda(P) \geq h_\nu(Q).$
\end{corollary}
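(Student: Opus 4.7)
The plan is to deduce this as an immediate combination of the preceding lemma with the definition of metric entropy for an ergodic invariant measure. First I would invoke the preceding lemma: under the hypothesis that $\nu$ is invariant and ergodic, we already have the pointwise bound $h_\lambda(P,z) \geq h_\nu(Q)$ holding for $\lambda$-almost every $z \in \mathbb P^1$. The only extra hypothesis in the corollary is that $\lambda$ itself is invariant and ergodic, and this hypothesis is precisely what is needed to convert the pointwise inequality into a statement about $h_\lambda(P)$.

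Next I would recall the setup from Section 5: since $\lambda$ is backwards invariant, the function $z \mapsto h_\lambda(P,z)$ satisfies $h_\lambda(P,z) \geq h_\lambda(P,P(z))$, and the ergodicity of $\lambda$ then forces this function to be constant $\lambda$-almost everywhere. By definition, the metric entropy $h_\lambda(P)$ is this almost-sure constant value. Therefore the set
$$
E := \{ z \in \mathbb P^1 : h_\lambda(P,z) = h_\lambda(P)\}
$$
has full $\lambda$-measure, and so does the set $F$ on which the previous lemma gives $h_\lambda(P,z) \geq h_\nu(Q)$. The intersection $E \cap F$ has full $\lambda$-measure, hence is non-empty, and picking any $z \in E \cap F$ yields
$$
h_\lambda(P) = h_\lambda(P,z) \geq h_\nu(Q).
$$

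There is no real obstacle here: the corollary is a formal consequence of the preceding lemma together with the constancy of $z \mapsto h_\lambda(P,z)$ on an ergodic invariant measure. The substantive content all lies in the previous lemma (and, before it, in the semi-conjugacy estimate $h_\lambda(P,z) \geq h_\nu(Q,\Phi(z))$).
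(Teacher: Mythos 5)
Your argument is correct and is exactly the reasoning the paper intends: the corollary is stated without proof precisely because it follows immediately from Lemma 5.3 together with the definition of $h_\lambda(P)$ as the $\lambda$-almost-everywhere constant value of $z \mapsto h_\lambda(P,z)$ for an invariant ergodic $\lambda$. Intersecting the two full-measure sets, as you do, is the right (and only) step needed.
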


The next lemma applies to a space $X$ with a selfmap $F$.

\begin{lemma}
Suppose that $\sigma$ and $\tau$ are two invariant, nonzero, positive, measures and let $\eta:=\sigma+\tau$.
Then $\eta$ is invariant. If $\eta$ is ergodic, then both $\sigma$ and $\tau$ are ergodic.
Moreover, they must both be multiples of $\eta.$
\end{lemma}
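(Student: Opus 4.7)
The plan matches the three assertions in order. Invariance of $\eta=\sigma+\tau$ follows at once from the linearity of pullback of measures: for every measurable set $E$,
$$
\eta(F^{-1}(E))=\sigma(F^{-1}(E))+\tau(F^{-1}(E))=\sigma(E)+\tau(E)=\eta(E),
$$
using the individual invariance of $\sigma$ and $\tau$.

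For ergodicity of $\sigma$ (the argument for $\tau$ being symmetric), the decisive observation is that $\sigma\le\eta$ (because $\tau\ge 0$), whence $\sigma\ll\eta$. I would take a strictly $F$-invariant measurable set $A$; ergodicity of $\eta$ forces $\eta(A)\in\{0,\eta(X)\}$, so either $\eta(A)=0$ (giving $\sigma(A)=0$) or $\eta(A^c)=0$ (giving $\sigma(A^c)=0$). In either case $\sigma(A)\in\{0,\sigma(X)\}$. A standard measure-theoretic reduction from sets that are $F$-invariant only modulo $\sigma$ to strictly $F$-invariant sets of the same $\sigma$-measure handles the usual ergodic-theoretic formulation.

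For proportionality, write $\sigma=f\eta$ via Radon--Nikodym, with $0\le f\le 1$. Applying Birkhoff's ergodic theorem to the ergodic measure $\eta$, for any bounded measurable $g$ the ergodic averages $A_n g(x):=\frac{1}{n}\sum_{k=0}^{n-1}g(F^k x)$ converge $\eta$-almost everywhere, hence $\sigma$-almost everywhere by absolute continuity, to the constant $c_g:=\frac{1}{\eta(X)}\int g\,d\eta$. On the other hand, invariance of $\sigma$ gives $\int A_n g\,d\sigma=\int g\,d\sigma$ for every $n$, so dominated convergence (justified by the uniform bound $|A_n g|\le\|g\|_\infty$ and finiteness of $\sigma$) yields
$$
\int g\,d\sigma \;=\; c_g\,\sigma(X) \;=\; \frac{\sigma(X)}{\eta(X)}\int g\,d\eta.
$$
Since this identity holds for every bounded measurable $g$, one concludes $\sigma=\frac{\sigma(X)}{\eta(X)}\eta$, and then $\tau=\eta-\sigma=\frac{\tau(X)}{\eta(X)}\eta$.

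The main obstacle is conceptual rather than technical: one must recognise that the nonnegativity of $\tau$ is what forces $\sigma\le\eta$, and hence the absolute continuity $\sigma\ll\eta$ that allows Birkhoff along $\eta$-generic points to pin down the behaviour of $\sigma$. Once that is in hand, the three conclusions unwind routinely in the stated order.
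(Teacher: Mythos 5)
Your proof is correct, and it reaches the conclusion by a somewhat different route than the paper. For the ergodicity of $\sigma$, the paper first establishes that $\sigma$ and $\tau$ are \emph{mutually} absolutely continuous: given $E$ with $\sigma(E)>0$, it forms the strictly invariant tail set $H=\bigcap_{k}\bigcup_{n\ge k}F^{-n}(E)$, notes that $\tau(H)>0$ by invariance of $\tau$, hence $\eta(H)=1$ by ergodicity, hence $\tau(E)>0$; it then argues by contradiction that a set witnessing non-ergodicity of $\sigma$ would be invariant mod $\tau$ as well and so would witness non-ergodicity of $\eta$. You bypass the equivalence of $\sigma$ and $\tau$ entirely, using only the trivial inequality $\sigma\le\eta$ (hence $\sigma\ll\eta$) to transfer ergodicity directly from $\eta$ down to $\sigma$; this is cleaner and actually proves the more general fact that an invariant measure absolutely continuous with respect to an ergodic invariant measure is itself ergodic and proportional to it. (Amusingly, the strictly invariant tail set you invoke for the standard reduction from ``invariant mod $\sigma$'' to strictly invariant sets is exactly the set $H$ the paper constructs, deployed for a different purpose.) For the proportionality both arguments come down to Birkhoff: the paper applies it separately to the three normalized ergodic measures and matches the a.e.\ constant limits via absolute continuity, whereas you apply it only along $\eta$ and combine with invariance of $\sigma$ and dominated convergence, which requires one fewer appeal to ergodicity. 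Both are valid; the finiteness of the measures needed for your normalizations is implicit in the paper as well, since it reduces to the case that $\eta$ is a probability measure.
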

\begin{proof}
Invariance of $\eta$ is clear. \\
Next, assume that $\eta$ is ergodic. We can suppose that $\eta$ is a probability measure. Suppose that $E$ is a set with positive $\sigma$-measure. 
Then $\tau(E)>0:$ Let $H:=\cap_{k} \cup_{n\geq k} F^{-n}(E).$ By invariance of $\tau$ this set has positive $\tau$-measure, hence also positive $\eta$-measure. The set is also invariant, hence
$\eta(H)=1.$ Therefore also $\tau(H)>0.$ But this implies that also $\tau(E)>0.$

Suppose next that $\sigma$ is not ergodic. Then there exists a set $E$ with
$0<\sigma(E)<\sigma(X)$ which is invariant, i.e. the sets $F^{-1}(E)\setminus E$ and
$E\setminus F^{-1}(E)$ both have $\sigma$-measure $0.$ This implies that the same is
true for $\tau.$ Hence $E$ is also invariant for $\tau$ and therefore also for $\eta.$
But $0<\eta(E)<\eta(X)$, contradicting ergodicity of $\eta.$\\
It remains to show the last statement.
Let $\sigma'=\sigma/\sigma(X), \tau'=\tau/\tau(X), \eta'=\eta/\eta(X).$ So these are
invariant ergodic probability measures. Let $E$ be a set. Then $\chi\circ F^n$ converge to a constant function, $\sigma'(E)=\tau'(E)=\eta'(E).$ Proportionality follows.
\end{proof}

\begin{lemma}
If $\nu$ is invariant ergodic on $\overline{S}$, then $\nu$ is the push forward of an invariant ergodic measure on $\mathbb P^1.$
\end{lemma}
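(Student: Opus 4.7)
The plan is first to produce \emph{some} $P$-invariant measure $\lambda$ on $\mathbb{P}^1$ with $\Phi_*\lambda = \nu$ via a fixed-point theorem, and then to refine $\lambda$ to an ergodic one using a Krein-Milman argument combined with the preceding decomposition lemma. Let $\mathcal{M}_\nu := \{\lambda \in \mathcal{P}(\mathbb{P}^1) : \Phi_*\lambda = \nu\}$. First I would check that $\mathcal{M}_\nu$ is nonempty, convex, and weak-$*$ compact. Convexity is immediate, and compactness follows from the weak-$*$ continuity of the pushforward map $\Phi_*$. For nonemptiness, recall that $\Phi$ is continuous, surjective, and has finite fibers of size at most $d^2-1$ by Lemma \ref{counting}; the multifunction $y \mapsto \Phi^{-1}(y)$ is therefore compact-valued and upper semicontinuous, so the Kuratowski-Ryll-Nardzewski selection theorem provides a Borel section $s:\overline{S}\to\mathbb{P}^1$ of $\Phi$, and $s_*\nu \in \mathcal{M}_\nu$. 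Because $Q\circ\Phi = \Phi\circ P$ and $Q_*\nu = \nu$, a brief computation shows that $P_*$ carries $\mathcal{M}_\nu$ into itself, and since $P_*$ is weak-$*$ continuous and affine, the Schauder-Tychonoff fixed-point theorem produces some $\lambda \in \mathcal{M}_\nu$ with $P_*\lambda = \lambda$.

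Let $\mathcal{I}_\nu \subset \mathcal{M}_\nu$ denote the (now nonempty) convex, weak-$*$ compact set of $P$-invariant measures lying in $\mathcal{M}_\nu$, and let $\lambda$ be an extreme point of $\mathcal{I}_\nu$, which exists by Krein-Milman. I claim $\lambda$ is ergodic. If not, choose a $P$-invariant Borel set $E\subset\mathbb{P}^1$ with $t := \lambda(E)\in (0,1)$, and write $\lambda = t\lambda_1 + (1-t)\lambda_2$ with $\lambda_1 := \lambda|_E/t$ and $\lambda_2 := \lambda|_{E^c}/(1-t)$, both being $P$-invariant probability measures (and distinct, since $\lambda_1(E)=1>t=\lambda(E)$). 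Pushing forward gives $\nu = t\Phi_*\lambda_1 + (1-t)\Phi_*\lambda_2$, a decomposition of the ergodic measure $\nu$ into a sum of two nonzero positive invariant measures on $\overline{S}$. The preceding lemma then forces each summand to be proportional to $\nu$; comparing total masses yields $\Phi_*\lambda_1 = \Phi_*\lambda_2 = \nu$, so both $\lambda_i$ lie in $\mathcal{I}_\nu$, contradicting the extremality of $\lambda$. Hence $\lambda$ is the desired invariant ergodic lift.

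The main subtlety lies in the nonemptiness of $\mathcal{M}_\nu$; once one measure projecting to $\nu$ is in hand, the fixed-point and extreme-point steps are formal consequences of standard abstract theorems. If one preferred to avoid citing a measurable selection theorem, an alternative approach is to approximate $\nu$ weak-$*$ by finite convex combinations of Dirac masses (each of which lifts trivially along $\Phi$ by choosing a preimage point) and extract a weak-$*$ limit using compactness of $\mathcal{P}(\mathbb{P}^1)$.
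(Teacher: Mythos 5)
Your proof is correct, but it takes a genuinely different route from the paper's. The paper constructs the lift explicitly: it starts from the measure $\lambda_1$ obtained by equidistributing $\nu$ over the fibers of $\Phi$, pushes it forward repeatedly by $P$ (checking at each step that the pushforward to $\overline{S}$ remains $\nu$), and extracts an invariant lift as a weak limit of Ces\`aro means --- a Krylov--Bogolyubov argument carried out inside the fiber $\{\lambda : \Phi_*\lambda = \nu\}$, which is the concrete counterpart of your selection-plus-fixed-point step. The real divergence is in producing an \emph{ergodic} lift. You take an extreme point of the compact convex set $\mathcal{I}_\nu$ of invariant lifts and use Lemma 5.5 to show that any nontrivial invariant splitting $\lambda = t\lambda_1 + (1-t)\lambda_2$ pushes forward to $\nu = t\nu + (1-t)\nu$ with both $\lambda_i$ again in $\mathcal{I}_\nu$, contradicting extremality. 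The paper instead decomposes a non-ergodic invariant lift into two mutually singular invariant pieces, applies Lemma 5.5 to see that both push forward to multiples of $\nu$, concludes that $\nu$ must then be carried by points with ever larger fibers, and terminates this induction using the bound from Lemma \ref{counting} on the number of mirror points. Your Krein--Milman argument buys a shorter and arguably more airtight ergodicity step (the paper's assertion that a non-ergodic measure splits into pieces with \emph{disjoint supports}, and its finitely terminating induction, are stated rather loosely); the paper's construction buys explicitness, exhibiting the lift as a limit of concrete fiber averages, which feeds into the equidistribution discussion later on. Two minor points: the fibers of $\Phi$ have at most $d^2$ points (a point together with its at most $d^2-1$ mirrors), and for non-emptiness of $\mathcal{M}_\nu$ your fallback via Dirac approximations is the lighter option, since weak-$*$ continuity of $\Phi_*$ passes the lifting property to the limit without invoking a measurable selection theorem.
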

\begin{proof}
We define inductively a sequence  $(\lambda_n)$ of measures on $\mathbb P^1$, all of which
have $\nu$ as push forward. We would like to find a measure which is also backwards invariant under $P$.

We divide $\mathbb P^1$ into finitely many sets $A_j$ where $A_j$ consists of those points for which the fibers of $\Phi$ have exactly $j$ points. So the mirrors consist of the points in $\cup_{j>1} A_j.$
We define $\lambda_1$ by dividing the measure of $\Phi(A_j)$ into $j$ equal measures. 

We inductively define $\lambda_{n+1}(E)=\lambda_n(P^{-1}(E)).$ Next we show that this procedure
keeps the property of the pushforward being $\nu.$ Let $F$ be a subset of $\overline{S}$ and let $E$ denote $\Phi^{-1}(F).$ Then 
$$
\begin{aligned}
\lambda_{n+1}(E) & = \lambda_n(P^{-1}(E))\\
& = \lambda_n(P^{-1}(\Phi^{-1}(F)))\\
& = \lambda_n(\Phi^{-1}(Q^{-1}(F)))\\
& = \nu(Q^{-1}(F))=\nu(F).
\end{aligned}
$$

Next, we use Cesaro means. Set $\lambda'_n=\frac{1}{n} \sum_{1\leq j\leq n}\lambda_j.$
The pushforward of these measures are all equal to $\nu.$ 
The total mass of the signed measure $\lambda'_{n+1}-\lambda'_n$ is at most
$2/(n+1).$ 
We get 
$$
\begin{aligned}
|\lambda'_{n+1}(E)-\lambda'_{n+1}(P^{-1}(E))| & \leq 
|\lambda'_{n+1}(E)-\lambda'_{n}(P^{-1}(E))|+\frac{2}{n+1}\\
& =
|\lambda'_{n+1}(E)-\frac{1}{n} \sum_{1\leq j\leq n}\lambda_j(P^{-1}(E))|+\frac{2}{n+1}\\
& = |\lambda'_{n+1}(E)-\frac{1}{n} \sum_{2\leq j\leq n+1}\lambda_j(E)|+\frac{2}{n+1},
\end{aligned}
$$
and therefore
$$
\begin{aligned}
|\lambda'_{n+1}(E)-\lambda'_{n+1}(P^{-1}(E))| & \leq |\lambda'_{n+1}(E)-\frac{1}{n+1} \sum_{1\leq j\leq n+1}\lambda_j(E)|+\frac{4}{n+1}\\
& = \frac{4}{n+1}.
\end{aligned}
$$
It follows that any weak limit is invariant on $\mathbb P^1$, and also that $\nu$ is the
pushforward of any weak limit.
Pick such a limit $\eta$. Suppose that $\eta$ is not ergodic. Then
we can write $\eta=\sigma+\tau$ where both measures are nonzero and invariant and their supports are disjoint. Let $\sigma',\tau'$ be their pushforwards. These measures are invariant by Lemma 5.1
and their sum is ergodic. Hence by Lemma 5.5, they are both ergodic
and in fact $\sigma'=a\nu$ and $\tau'=b\nu$ for $a+b=1$ and $0<a,b.$
Hence we have that $\nu$ is the pushforward of two measures, with disjoint supports.
This implies that $\nu$ cannot charge the complement of the mirrors. We can repeat the argument
if neither $\sigma$ nor $\tau$ is ergodic. We then see that $\nu$ cannot charge any mirror with
at most three points in the mirror. Repeating the procedure finitely many times and using that
there is an upper bound on the number of mirrored points, we see that
one of the measures obtained must be ergodic.
Hence $\nu$ must be the push forward of an invariant, ergodic measure.
\end{proof}

Now we denote by $\mu$ the measure of maximal entropy on $\mathbb P^1$. We have the following.

\begin{lemma}
The metric entropy of the push-forward of the measure $\mu$ is the same as the metric entropy of the
measure $\mu$
\end{lemma}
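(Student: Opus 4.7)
By the corollary (stating $h_{\mu}(P) \ge h_{\nu}(Q)$) we already have one inequality, so the content of the lemma is the reverse bound $h_{\nu}(Q) \ge h_{\mu}(P)$. My plan is to use the special neighborhood basis $\{V_k\}$ of the diagonal of $\mathbb{P}^1\times\mathbb{P}^1$ constructed in Section~4. Its decisive features are that each $V_k$ is saturated by the fibers of $\Phi$, i.e.\ $\Phi^{-1}\Phi(V_k)=V_k$, and that $\{(\Phi\times\Phi)(V_k)\}$ is a neighborhood basis of the diagonal in $\overline{S}\times\overline{S}$.

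From saturation, $\Phi^{-1}\bigl(B(\Phi(z),(\Phi\times\Phi)(V_k),n)\bigr) = B(z,V_k,n)$, so
\begin{equation*}
\mu(B(z,V_k,n)) = \nu\bigl(B(\Phi(z),(\Phi\times\Phi)(V_k),n)\bigr)
\end{equation*}
for every $z,n,k$. This gives the exact identity $h_{\mu}(P,z,V_k)=h_{\nu}(Q,\Phi(z),(\Phi\times\Phi)(V_k))$, and because $\{(\Phi\times\Phi)(V_k)\}$ is a neighborhood basis of the diagonal in $\overline{S}\times\overline{S}$, taking $\sup_k$ on the right yields $h_{\nu}(Q,\Phi(z)) = \sup_k h_{\mu}(P,z,V_k)$.

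The main step is to identify $\sup_k h_{\mu}(P,z,V_k)$ with $h_{\mu}(P,z)$ for $\mu$-almost every $z$. Using the construction of $\mathcal{N}(\overline{x})$ together with the $P$-equivariance of the mirror relation (if $w$ mirrors $z$ then $P(w)$ mirrors $P(z)$), a point $w_0$ lies in $B(z,V_k,n)$ exactly when $w_0$ is close to one of the (at most $M = d^2-1$, by Lemma~\ref{counting}) mirrors $m_1=z,m_2,\ldots,m_r$ of $z$ and its Bowen orbit shadows that of the chosen $m_i$ at scale $\varepsilon(k)\to 0$. This produces the disjoint decomposition
\begin{equation*}
B(z,V_k,n) = \bigsqcup_{i=1}^{r} B(m_i,U_{\varepsilon(k)},n).
\end{equation*}
Since $r$ is uniformly bounded, the $\tfrac{1}{n}\log r$ overhead vanishes, and $\liminf_n$ commutes with $\min$ over a finite family, so $h_{\mu}(P,z,V_k)=\min_i h_{\mu}(P,m_i,U_{\varepsilon(k)})$. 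Passing to $\sup_k$ (monotone, so it commutes with the finite $\min$) gives $\sup_k h_{\mu}(P,z,V_k) = \min_i h_{\mu}(P,m_i)$.

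Ergodicity of $\mu$ gives $h_{\mu}(P,w)=h_{\mu}(P)$ on a $\mu$-conull set, which can be chosen saturated under the mirror correspondence: automatically when $P$ has real coefficients by conjugation invariance of $\mu$, and otherwise because the mirror locus lies in a real-analytic set of dimension at most one, which cannot be charged by the measure of maximal entropy of a non-exceptional polynomial. Hence $\min_i h_{\mu}(P,m_i) = h_{\mu}(P)$ for $\mu$-a.e.\ $z$, so $h_{\nu}(Q,\Phi(z))\ge h_{\mu}(P)$ $\mu$-a.e., and invoking the ergodicity of $\nu$ (the first lemma of Section~5) yields $h_{\nu}(Q)\ge h_{\mu}(P)$. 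The main obstacle is the mirror decomposition of $B(z,V_k,n)$ in the third paragraph: one has to verify that for generic $z$ and large $k$ the local geometry of $V_k$ near $z$ is precisely a union of small well-separated disks at the distinct mirrors of $z$, and to use the $P$-equivariance of the mirror relation to forbid Bowen orbits from switching between mirror-chains within the time window.
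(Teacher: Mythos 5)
Your reduction to the saturated neighborhoods $V_k$ is a reasonable idea, but the step you yourself flag as ``the main obstacle'' is a genuine gap, and it does not go away. The claimed disjoint decomposition $B(z,V_k,n)=\bigsqcup_i B(m_i,U_{\varepsilon(k)},n)$ is false: a point $w_0$ only needs, at \emph{each} time $j$, to lie within $\delta(k)$ of \emph{some} element of the fiber $\Phi^{-1}(\Phi(z_j))$, and nothing in the $P$-equivariance of the mirror relation prevents it from tracking one mirror chain for a while and then drifting over to another (mirrors break and fibers reorganize along the orbit). This is exactly the phenomenon the paper's own Theorem 4.6 has to fight: there the switching is not forbidden but \emph{counted}, at a cost of $M^{n/k+1}$ words. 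If you import that repair here, $B(z,V_k,n)$ is covered by up to $M^{n/k+1}$ ``word sets,'' each of which is an intersection of Bowen-ball segments centered at \emph{different} points over different time windows --- not a Bowen ball of any single point --- so the local entropy $h_\mu(P,m_i)$ no longer controls its measure and your third paragraph collapses. A second gap: even granting the decomposition, you need the Brin--Katok conull set to be saturated under the mirror correspondence, and your justification --- that the mirror locus is $\mu$-null because it is a real-analytic set of dimension at most one --- is unproved and not generally true for measures of maximal entropy (for Chebyshev-like maps $\mu$ lives on an analytic arc); nothing in the paper rules this out for the mirror curves.

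The paper's proof avoids both problems by never invoking almost-everywhere local entropy at the mirror points. It fixes $\overline{x}_0$, pulls $\Phi^{-1}(B(\overline{x}_0,\epsilon,n))$ back to the finitely many mirror points $z_0^1,\dots,z_0^m$, discards the components near mirrors whose orbits leave the Julia set (they carry no $\mu$-mass), and then estimates the measure of each remaining component \emph{deterministically} using the balanced property $P^*\mu=d\mu$: whenever the orbit stays a definite distance $2\delta_0$ from the critical points, pulling back one step divides the measure by $d$, and the fraction of times this fails is at most $1/k$. This yields $\nu(B(\overline{x}_0,\epsilon,n))\le C d^{-(n-n/k)}$ for every relevant $\overline{x}_0$, with no genericity assumption on the mirror points and no need to decompose the Bowen ball exactly. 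If you want to salvage your route, you would need (i) a measure-theoretic version of the word-counting that bounds the $\mu$-measure of each word set, which in effect forces you back to the balanced-measure estimate, and (ii) an actual proof that the relevant conull set can be taken mirror-saturated.
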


\begin{proof}
We choose a point $\overline{x}_0$ in $\overline{S}$ and want to estimate the entropy function there.
For $j = 1 \ldots m_0$ let $z_0^j$ denote the corresponding mirrored points in case there is a mirror there.
If not, we consider only $z_0^1$. Suppose that the set $\{z_n^1,\dots ,z_n^j\}$ has $m_j$ distinct points. Then we know that $m_0\geq m_1\geq \cdots \ge 1.$ So the number of points decreases until it eventually stabilizes. 

We now assume that we have $m$ mirrored points $z_0^1,\dots,z_0^m$ and that
for all $n,$ the points $z_n^1,\dots,z_n^m$ are disjoint. 

Choose a metric $\rho$ on $\overline{S}.$ Let $\epsilon>0.$ We will estimate the $\nu$ measure
of the balls $B(\overline{x}_0,n).$ Since the measure $\nu$ is supported on the image of the Julia set, we can assume that at least one $z_0^j$ is in $J.$ We assume that  $z_0^1, \dots, z_0^s$ are the points in the mirror with orbits not converging to a periodic critical orbit. (Such points are not in $J.$)
If $\epsilon>0$ is small enough, then the part of $\Phi^{-1}(B(\overline{x}_0,\epsilon,n))$ which is near
$z_0^{s+1},\dots,z_0^m$ carries no mass.

Let $k$ be a large integer. We choose a $\delta_0>0$ small enough so that the fraction of integers $n$ for which some $z_n^j, j=1,\dots,s$ is closer than $2\delta_0$ to a critical point is at most $1/k.$ 

For $\ell<n$ we define the $\epsilon$-balls in $\mathbb P^1$, 
$$
B'(n,\ell,\epsilon) := \bigcap_{r = 0}^{n-\ell}  \{w \in \mathbb P^1 \; \mid \; \rho(\Phi(P^r(w)), \overline{x}_{\ell+r}) < \epsilon \}.
$$
We see that as long as the points $z_{n-1}^1, \ldots , z_{n-1}^s$ have distance at least $2\delta_0$ to the critical points and $\epsilon$ is small enough that $P$ is one to one on $B(z_{\ell-1}$, then 
$$
\mu (B'(n,\ell-1,\epsilon)) \le \mu(B'(n,\ell,\epsilon))/d.
$$
On the other hand we always have that
$$
\mu (B'(n,\ell-1,\epsilon)) \le \mu(B'(n,\ell,\epsilon)).
$$
It follows that the measure of $B'(n,0,\epsilon)$ is at most $C\left(\frac{1}{d}\right)^{n-n/k}$. Therefore the metric entropy on $\overline{S}$ is at least $\log d.$
\end{proof}


\section{Proof of Theorem 1.4}

In this section we restate and prove the main theorem of the paper.

\medskip

\noindent {\bf Theorem 1.4.} \emph{Let $P$ be a non-exceptional complex polynomial of degree $d \geq 2$.
Then the probability measure $\nu$ is invariant and ergodic. Moreover it is the unique measure of maximal entropy, $\log d.$}

\medskip

\begin{proof}
It follows from Lemma 5.1 that $\nu$ is invariant and ergodic. From Lemma 5.7 it follows that the metric entropy of $\nu$ is $\log d.$ From Theorem 4.6 we see that $P$ also has topological entropy
$\log d.$ If $\sigma\neq \nu$ is any other invariant ergodic probability measure on $S$, then 
by Lemma 5.6, $\sigma$ is the pushforward of an invariant ergodic probability measure
$\tau$ on $\mathbb P^1.$ Necessarily, $\tau \neq \mu.$ Hence  the metric entropy of $\tau$
is strictly less than $\log d.$ It follows by Corollary 5.4 that the metric entropy of $\sigma$ is strictly less than $\log d.$ Therefore $\nu$ is the unique measure of maximal entropy.
\end{proof}

\end{document}